\DeclareMathAlphabet{\mathpzc}{OT1}{pzc}{m}{it} %allows \mathpzc command for Zapf Chancery alphabet
\theoremstyle{plain}
\newtheorem{theorem}{Theorem}
\newtheorem{lemma}{Lemma}
\newtheorem{prop}{Proposition}
\newtheorem{corollary}{Corollary}
\theoremstyle{remark}
\newtheorem{remarks}{Remarks}
\theoremstyle{definition}
\newcommand{\V}{\mathcal{V}}
\newcommand{\Q}{\mathcal{Q}}
\newcommand{\B}{\mathcal{B}}
\newcommand{\J}{\mathcal{J}}
\newcommand{\G}{\mathcal{G}}
\newcommand{\A}{\mathcal{A}}
\newcommand{\PP}{\mathbb{P}}
\newcommand{\HH}{\mathbb{H}}
\newcommand{\PV}{P_{\V}}
\newcommand{\CAT}{\mathsf{CAT}}
\newcommand{\Set}{\mathsf{Set}}
\newcommand{\Sup}{\mathsf{Sup}}
\newcommand{\Rel}{\mathsf{Rel}}
\newcommand{\Cat}{\mathsf{Cat}}
\newcommand{\Mod}{\mathsf{Mod}}
\newcommand{\Ord}{\mathsf{Ord}}
\newcommand{\Met}{\mathsf{Met}}
\newcommand{\VV}[1]{ \V\text{-}{#1}}
\newcommand{\VRel}{\VV{\Rel}}
\newcommand{\VCat}{\VV{\Cat}}
\newcommand{\VMod}{\VV{\Mod}}
\newcommand{\two}{\mathsf{2}}
\newcommand{\Sub}{\mathrm{Sub}}
\newcommand{\ob}{\mathrm{ob}}
\newcommand{\emb}{\mathrm{emb}}
\newcommand{\sym}{\mathrm{sym}}
\newcommand{\op}{\mathrm{op}}
\newcommand{\yy}{\mathpzc{y}} %Yoneda embedding functor
\newcommand{\mm}{\mathpzc{m}}
\newcommand{\YY}{\mathpzc{Y}}
\newcommand{\relto}{\mathrel{\mathmakebox[\widthof{$\xrightarrow{\rule{1.45ex}{0ex}}$}]
{\xrightarrow{\rule{1.45ex}{0ex}}\hspace*{-2.4ex}{\mapstochar}\hspace*{1.8ex}}}} %relation arrow
\newcommand{\ihom}{\mathrel{\mathmakebox[\widthof{$-$}-3pt]
{$--$\hspace*{-0.15ex}{\circ}}}} %internal hom
\newcommand{\modto}{\mathrel{\mathmakebox[\widthof{$\xrightarrow{\rule{1.45ex}{0ex}}$}]
{\xrightarrow{\rule{1.45ex}{0ex}}\hspace*{-2.8ex}{\circ}\hspace*{1ex}}}}
\newcommand{\submodto}{
    \mathrel{
        \mathmakebox[2.7ex]{
            \longrightarrow
            \hspace*{-2.1ex}{
                \circ
            }
            \hspace*{1ex}
        }
    }
}
\newcommand\unnumberedfootnote[1]{ %
        \let\temp=\thefootnote %
        \renewcommand{\thefootnote}{}%
        \footnote{#1}%
        \let\thefootnote=\temp%
        \addtocounter{footnote}{-1}}
\begin{document}

%Title
\title{On the categorical meaning of Hausdorff\\ and Gromov distances, I}

%First Author
\author{Andrei Akhvlediani}
\address{Oxford University Computing Laboratory\\ Oxford OX1 3QD, United Kingdom} \email{andrei.akhvlediani@comlab.ox.ac.uk}
\thanks{The first author acknowledges partial financial assistance from NSERC}

\author{Maria Manuel Clementino}
\address{Department of Mathematics\\ University of Coimbra\\ 3001-454 Coimbra, Portugal}
\email{mmc@mat.uc.pt}
\thanks{The second author acknowledges financial support from the Center of Mathematics of the University of Coimbra/FCT}

%Second author
\author{Walter Tholen}
\address{Department of Mathematics and Statistics\\
  York University\\
 Toronto, ON M3J 1P3, Canada
} \email{tholen@mathstat.yorku.ca}
\thanks{The third author acknowledges partial financial assistance from NSERC}

%date
\date{\today}

%abstract
\begin{abstract}
Hausdorff and Gromov distances are introduced and treated in the context of categories enriched over a commutative unital quantale $\V$. The Hausdorff functor which, for every $\V$-category $X$, provides the powerset of $X$ with a suitable $\V$-category structure, is part of a monad on $\VV\Cat$ whose Eilenberg-Moore algebras are order-complete. The Gromov construction may be pursued for any endofunctor $K$ of $\VV\Cat$. In order to define the Gromov ``distance" between $\V$-categories $X$ and $Y$ we use $\V$-modules between $X$ and $Y$, rather than  $\V$-category structures on the disjoint union of $X$ and $Y$. Hence, we first provide a general extension theorem which, for any $K$, yields a lax extension $\tilde{K}$ to the category $\VV\Mod$ of $\V$-categories, with $\V$-modules as morphisms.
\end{abstract}

%subject classification
\subjclass[2000]{Primary 18E40; Secondary 18A99}

%keywords
\keywords{}

%Dedication
\dedicatory{}

%print all of the above
\maketitle

\section{Introduction}
\label{s:intro}

The Hausdorff metric for (closed) subsets of a (compact) metric
space has been recognized for a long time as an important concept in
many branches of mathematics, and its origins reach back even beyond
Hausdorff \cite{Ha}, to Pompeiu \cite{Po}; for a modern account, see
\cite{BBI}. It has gained renewed interest through Gromov's work
\cite{G2}. The Gromov-Hausdorff distance of two (compact) metric
spaces is the infimum of their Hausdorff distances after having been
isometrically embedded into any common larger space. There is
therefore a notion of convergence for (isometry classes of compact)
metric spaces which has not only become an important tool in
analysis and geometry, but which has also provided the key
instrument for the proof of Gromov's existence theorem for a
nilpotent subgroup of finite index in every finitely-generated group
of polynomial growth \cite{G1}.

By interpreting the (non-negative) distances $d(x,y)$ as $\hom(x,y)$ and, hence, by rewriting the conditions
$$0\geq d(x,x),\;d(x,y)+d(y,z)\geq d(x,z)\eqno{(*)}$$
as
\[k\to \hom(x,x),\;\hom(x,y)\otimes \hom(y,z)\to\hom(x,z),\]
Lawvere \cite{L} described metric spaces as categories enriched over the (small and ``thin") symmetric monoidal-closed category $\PP_+=(([0,\infty],\geq),+,0)$, and in the Foreword of the electronic ``reprint" of \cite{L} he suggested that the Hausdorff and Gromov metrics should be developed for an arbitrary symmetric monoidal-closed category $(\V,\otimes,k)$. In this paper we present notions of Hausdorff and Gromov distance for the case that $\V$ is ``thin". Hence, we replace $\PP_+$ by a commutative and unital {\em quantale} $\V$, that is: by a complete lattice which is also a commutative monoid $(\V,\otimes,k)$ such that the binary operation $\otimes$ preserves suprema in each variable. Put differently, we try to give answers to questions of the type: which structure and properties of the (extended) non-negative real half-line allow for a meaningful treatment of Hausdorff and Gromov distances, and which are their appropriate carrier sets? We find that the guidance provided by enriched category theory \cite{Ke} is almost indispensable for finding satisfactory answers, and that so-called ({\em bi-}){\em modules} (or {\em distributors}) between $\V$-categories provide an elegant tool for the theory which may easily be overlooked without the categorical environment. Hence, our primary motivation for this work is the desire for a better understanding of the true essentials of the classical metric theory and its applications, rather than the desire for giving merely a more general framework which, however, may prove to be useful as well.

Since $(*)$ isolates precisely those conditions of a metric which lend themselves naturally to the $\hom$ interpretation, a discussion of the others seems to be necessary at this point; these are:
\begin{itemize}
\item[$-$] $d(x,y)=d(y,x)$ ({\em symmetry}),
\item[$-$] $x=y$ whenever $d(x,y)=0=d(y,x)$ ({\em separation}),
\item[$-$] $d(x,y)<\infty$ ({\em finiteness}).
\end{itemize}
With the distance of a point $x$ to a subset $B$ of the metric space $X=(X,d)$ be given by $d(x,B)=\inf_{y\in B}d(x,y)$, the {\em non-symmetric Hausdorff distance} from a subset $A$ to $B$ is defined by
\[Hd(A,B)=\sup_{x\in A} d(x,B),\]
from which one obtains the classical {\em Hausdorff distance}
\[H^sd(A,B)=\max \{Hd(A,B),Hd(B,A)\}\]
by {\em enforced} symmetrization. But not only symmetry, but also separation and finiteness get lost under the rather natural passage from $d$ to $Hd$. (If one thinks of $d(x,B)$ as the travel time from $x$ to $B$, then $Hd(A,B)$ may be thought of as the time needed to evacuate everyone living in the area $A$ to the area $B$.) In order to save them one usually restricts the carrier set from the entire powerset $PX$ to the closed subsets of $X$ (which makes $H^sd$ separated), or even to the non-empty compact subsets (which guarantees also finiteness). As in \cite{HT} we call a $\PP_+$-category an {\em $L$-metric space}, that is a set $X$ equipped with a function $d:X\times X\to[0,\infty]$ satisfying $(*)$; a $\PP_+$-functor $f:(X,d)\to (X',d')$ is a non-expansive map, e.g. a map $f:X\to X'$ satisfying $d'(f(x),f(y))\leq d(x,y)$ for all $x,y\in X$. That the underlying-set functor makes the resulting category $\Met$ {\em topological} over $\Set$ (see \cite{CHT}) provides furthe!
 r evidence that properties $(*)$ are fundamental and are better considered separately from the others, even though symmetry (as a coreflective property) would not obstruct topologicity. But inclusion of (the reflective property of) separation would, and inclusion of (the neither reflective nor coreflective property of) finiteness would make for an even poorer categorical environment.

While symmetry seems to be artificially superimposed on the Hausdorff metric, it plays a crucial role for the Gromov distance, which becomes evident already when we look at the most elementary examples. Initially nothing prevents us from considering arbitrary $L$-metric spaces $X,Y$ and putting
\[GH(X,Y)=\inf_Z Hd_Z(X,Y),\]
where $Z$ runs through all $L$-metric spaces $Z$ into which both $X$ and $Y$ are isometrically embedded. But for $X=\{p\}$ a singleton set and $Y=\{x,y,z\}$ three equidistant points, with all distances $1$, say, for every $\varepsilon>0$ we can make $Z=X\sqcup Y$ a (proper) metric space, with $d(p,x)=d(x,p)=\varepsilon$ and all other non-zero distances $1$. Then $Hd_Z(X,Y)=\varepsilon$, and $GH(X,Y)=0$ follows. One has also $GH(Y,X)=0$ but here one needs non-symmetric (but still separated) structures: put $d(x,p)=d(y,p)=d(z,p)=\varepsilon$, but let the reverse distances be $1$. Hence, even {\em a posteriori} symmetrization leads to a trivial distance between non-isomorphic spaces. However, there are two ways of {\em a priori} symmetrization which both yield the intuitively desired result $\frac{1}{2}$ for the Gromov distance in this example:
%\[\xymatrix{&&&&&&\\&&&\cdot\ar@{}[u]|(0.2)z\ar@{-}[lldd]_1\ar@{-}[rrdd]^1\ar@{-}[d]^{\frac{1}{2}}&&&\\
%&&&\cdot\ar@{}[lu]|(0.15)p&&&\\
%&\cdot\ar@{}[l]|(0.2)x\ar@{-}[rru]^(0.6){\frac{1}{2}}\ar@{-}[rrrr]_1&&&&\cdot\ar@{-}[llu]_(0.6){\frac{1}{2}}\ar@{}[r]|(0.2)y &}\]
 \begin{figure}[htb]
$$
\beginpicture
 \setcoordinatesystem units <.8cm,.8cm>
 %\unitlength=10mm
 \arrow <0pt> [,] from 0 0 to 4 0
 \arrow <0pt> [,] from 0 0 to 2 3.464
 \arrow <0pt> [,] from 4 0 to 2 3.464
 \arrow <0pt> [,] from 0 0 to 2 1.732
 \arrow <0pt> [,] from 4 0 to 2 1.732
 \arrow <0pt> [,] from 2 3.464 to 2 1.732
 \put {$x$} at -.24 -.15
 \put {$y$} at 4.24 -.15
 \put {$z$} at 2 3.664
 \put {$p$} at 1.8 1.8
 \put {$1$} at 2 -.25
  \put {$1$} at .85 2
  \put {$1$} at 3.15 2
  \put {$\frac{1}{2}$} at 2.2 2.5
  \put {$\frac{1}{2}$} at 1.05 1.2
  \put {$\frac{1}{2}$} at 2.95 1.2
  \endpicture
$$
\end{figure}
One way is by restricting the range of the infimum in the definition of $GH(X,Y)$ to symmetric $L$-metric spaces, which seems to be natural when $X$ and $Y$ are symmetric. (Indeed, if for our example spaces one assumes $Hd_Z(Y,X)<\frac{1}{2}$ with $d_Z$ symmetric, then the triangle inequality would be violated: $1\leq d(x,p)+d(p,y)<\frac{1}{2}+\frac{1}{2}$.) The other way ``works" also for non-symmetric $X$ and $Y$; one simply puts
\[GH^s(X,Y)=\inf_Z H^sd_Z(X,Y),\]
with $Z$ running as in $GH(X,Y)$. (When $Hd_Z(Y,X)\leq\frac{1}{2}$, then
\[\frac{1}{2}=1-\frac{1}{2}\leq\min \{d(p,x), d(p,y), d(p,z)\}=Hd_Z(X,Y)\leq H^sd_Z(X,Y),\]
and when $Hd_Z(Y,X)\geq\frac{1}{2}$, then trivially $\frac{1}{2}\leq H^sd_Z(X,Y)$.)

Having recognized $H$ (and $H^s$) as endofunctors of $\Met$, these considerations suggest that $G$ is an ``operator" on such endofunctors. But in order to ``compute" its values, one needs to ``control" the spaces $Z$ in their defining formula, and here is where modules come in. (A module between $L$-metric spaces generalizes a non-expansive map just like a relation generalizes a mapping between sets.) A module from $X$ to $Y$ corresponds to an $L$-metric that one may impose on the disjoint union $X\sqcup Y$. To take advantage of this viewpoint, it is necessary to extend $H$ from non-expansive maps to modules (leaving its operation on objects unchanged) to become a lax functor $\tilde{H}$. Hence, $GH(X,Y)$ may then be more compactly defined using an infimum that ranges just over the hom-set of modules from $X$ to $Y$.

In Section 2 we give a brief overview of the needed tools from enriched category theory, in the highly simplified context of a quantale $\V$. The purpose of Section 3 is to establish a general extension theorem for endofunctors of $\V$-$\Cat$, so that they can act on $\V$-modules rather than just on $\V$-functors. In Sections 4 and 5 we consider the Hausdorff monad of $\V$-$\Cat$ and its lax extension to $\V$-modules, and we determine the Eilenberg-Moore category in both cases. The Gromov ``distance" is considered for a fairly general range of endofunctors of $\V$-$\Cat$ in Section 6, and the resulting Gromov ``space" of isomorphism classes of $\V$-categories is presented as a large colimit. For the endofunctor $H$, in Section 7 this large ``space" is shown to carry internal monoid structures in the monoidal category $\V$-$\CAT$ which allow us to consider $H$ as an internal homomorphism. The effects of symmetrization and the status of separation are discussed in Sections 8 and 9. The fundamental question of transfer of (Cauchy-)completeness from $X$ to $HX$, as well as the question of completeness of suitable subspaces of the Gromov ``space" will be considered in the second part of this paper.

The reader is reminded that, since $\PP_+$ carries the natural $\geq$ as its order, in the context of a general quantale $\V$ the
natural infima and suprema of $\PP_+$ appear as joins ($\bigvee$) and meets ($\bigwedge$) in $\V$. While this may appear to be irritating initially, it reflects in fact the logical viewpoint dictated by the elementary case $\V=\two=(\{\bot<\top\},\wedge,\top)$, and it translates back well even in the metric case. (For example, if we write the $\sup$-metric $d$ of the real function space $C(X)$ as $d(f,g)=\displaystyle\bigwedge_{x\in X} |f(x)-g(x)|$, then the statement
\[d(f,g)=0\;\iff\;\mbox{\em for all } x\in X\;:\;f(x)=g(x)\]
seems to read off the defining formula more directly.)
\bigskip

\noindent {\em Acknowledgments} While the work presented in this paper first began to take shape when, aimed with her knowledge of the treatment of the Hausdorff metric in \cite{CH04}, the second-named author visited the third in the Spring of 2008, which then gave rise to a much more comprehensive study by the first-named author in his Master's thesis \cite{AA} that contains many elements of the current work, precursors of it go in fact back to a visit by Richard Wood to the third-named author in 2001. However, the attempt to work immediately with a (non-thin) symmetric monoidal-closed category proved to be too difficult at the time. The second- and third-named authors also acknowledge encouragement and fertile pointers given by Bill Lawvere over the years, especially after a talk of the third-named author at the Royal Flemish Academy of Sciences in October 2008. This talk also led to a most interesting exchange with Isar Stubbe who meanwhile has carried the theme of this paper into the more general context whereby the quantale $\V$ is traded for a quantaloid $\Q$ (see \cite{St}), a clear indication that the categorical study of the Hausdorff and Gromov metric may still be in its infancy.

\section{Quantale-enriched categories}
Throughout this paper, $\V$ is a commutative, unital quantale.
Hence, $\V$ is a complete lattice with a commutative, associative
binary operation $\otimes$ and a $\otimes$-neutral element $k$,
such that $\otimes$ preserves arbitrary suprema in each variable.
Our paradigmatic examples
\[
    \two = \big(\{\bot< \top\},\wedge,\top\big) \text{ and } \PP_+ = \big( ([0,
    \infty], \geq), +, 0 \big)
\]
were already considered by Lawvere \cite{L}; they serve to provide
both an order-theoretic and a metric intuition for the theory.

A {\em $\V$-relation} $r$ from a set $X$ to a set $Y$, written as $r: X
\relto Y$, is simply a function $r: X \times Y \to \V$. Its
composition with $s: Y \relto Z$ is given by
\[
    \big(s \cdot r\big)(x,z) = \bigvee_{y \in Y} r(x,y) \otimes
    s(y,z).
\]
This defines a category $\VRel$, and there is an obvious functor $\Set \to \VRel$ which assigns to a mapping $f: X \to Y$ its $\V$-graph $f_{\circ}: X \relto Y$ with $f_{\circ}(x,y) = k$ if $f(x) = y$, and $f_{\circ}(x,y) = \bot$ otherwise. This functor is faithful only if $k > \bot$, which we will assume henceforth, writing just $f$ for $f_{\circ}$. There is an involution $(\;)^{\circ}: \VRel^{\op} \to \VRel$ which sends $r: X \relto Y$ to $r^{\circ}: Y \relto X$ with $r^{\circ}(y,x) = r(x,y)$. With the pointwise order of its hom-sets, $\VRel$ becomes order-enriched, e.g. a 2-category, and mappings $f:X \to Y$ become maps in the 2-categorical sense:
\[
    1_X \leq f^{\circ} \cdot f, \;\; f \cdot f^{\circ} \leq 1_Y.
\]

A {\em $\V$-category} $X=(X,a)$ is a set $X$ with a $\V$-relation $a:X \relto X$ satisfying $1_X \leq a$ and $a \cdot a \leq a$; elementwise this means
\[
    k \leq a(x,x) \text{ and } a(x,y) \otimes a(y,z) \leq a(x,z).
\]
A {\em $\V$-functor} $f: (X,a) \to (Y,b)$ is a map $f:X \to Y$ with $f
\cdot a \leq b \cdot f$, or equivalently
\[
    a(x,y) \leq b\big(f(x), f(y)\big)
\]
for all $x,y \in X$. The resulting category $\VCat$ yields the
category $\Ord$ of (pre)ordered sets and monotone maps for $\V=\two$ and the category $\Met$ of $L$-metric spaces for $\V = \PP_+$.

$\VCat$ has a symmetric monoidal-closed structure, given by
\[
    (X,a) \otimes (Y,b) = (X \times Y, a \otimes b), \;\; X \ihom Y =
    \big(\VCat(X,Y), c\big)
\]
with
\[
    a \otimes b \big((x,y), (x',y')\big) = a(x,x') \otimes b(y,y'),
\]
\[
    c(f,g) = \bigwedge_{x \in X} b\big(f(x), g(x) \big).
\]
Note that $X \otimes Y$ must be distinguished from the Cartesian product $X \times Y$ whose structure is $a \times b$ with
\[
    a \times b\big((x,y), (x',y')\big) = a(x,x') \wedge b(y,y').
\]

$\V$ itself is a $\V$-category with its ``internal hom'' $\ihom$, given by
\[
    z \leq x \ihom y \iff z \otimes x \leq y
\]
for all $x,y,z \in \V$. The morphism $\two \to \V$ of quantales has a right adjoint $\V \to \two$ that sends $v \in \V$ to $\top$ precisely when $v \geq k$. Hence, there is an induced functor
\[
    \VCat \to \Ord
\]
which provides a $\V$-category with the order
\[
    x \leq y \iff k \leq a(x,y).
\]

Since
\[
    \VRel(X,Y) = \V^{X \times Y} = (X \times Y) \ihom \V
\]
is a $\V$-category (as a product of $(X \times Y)$-many copies of $\V$, or as a ``function space'' with $X$, $Y$ discrete), it is easy to show that $\VRel$ is $\big(\VCat\big)$-enriched, e.g.
\[
\begin{array}{rclcrcl}
  E &\to& \VRel(X,X), & &\VRel(X,Y) \otimes \VRel(Y,Z) &\to& \VRel(X,Z)\\
  * &\mapsto& 1_X, & &(r,s) &\mapsto& s \cdot r
\end{array}\]
are $\V$-functors (where $E = (\{*\}, k)$ is the $\otimes$-unit in $\VCat$).

\section{Modules, Extension Theorem}

For $\V$-categories $X = (X,a), Y = (Y,b)$ a {\em $\V$-(bi)module}
(also: $\V$-\emph{distributor}, $\V$-\emph{profunctor}) $\varphi$ from $X$ to $Y$, written as $\varphi: X \modto Y$, is a  $\V$-relation $\varphi: X \relto Y$ with $\varphi \cdot a \leq \varphi$ and $b \cdot \varphi \leq \varphi$, that is
\[
    a(x',x) \otimes \varphi(x,y) \leq \varphi(x',y) \text{ and }
    \varphi(x,y) \otimes b(y,y') \leq \varphi(x,y')
\]
for all $x,x' \in X, \; y,y' \in Y$. For $\varphi: X \modto Y$ one actually has $\varphi \cdot a = \varphi = b \cdot \varphi$, so that $1_X^* := a$ plays the role of the identity morphism in the category $\VMod$ of $\V$-categories (as objects) and $\V$-modules (as morphisms). It is easy to show that a $\V$-relation $\varphi: X \modto Y$ is a $\V$-module if, and only if, $\varphi: X^{\op} \otimes Y \to \V$ is a $\V$-functor (see \cite{CH07}); here $X^{\op}= (X, a^{\circ})$ for $X = (X,a)$. Hence,
\[
    \VMod(X,Y) = \big(X^{\op} \otimes Y\big) \ihom \V.
\]
In particular $\VMod$ \emph{is} (like $\VRel$) $\VCat$-\emph{enriched}. Also, $\VMod$ inherits the 2-categorical structure from $\VRel$, just via pointwise order.

Every $\V$-functor $f: X \to Y$ induces adjoint $\V$-modules
\[
    f_* \dashv f^* : Y \modto X
\]
with $f_* := b \cdot f$, $f^*:= f^{\circ} \cdot b$ (in $\VRel$). Hence, there are functors
\[
    (-)_*: \VCat \to \VMod, \;\; (-)^*: \VCat^{\op} \to \VMod
\]
which map objects identically. $\VCat$ becomes order-enriched (a 2-category) via
\[
    f \leq g \iff f^* \leq g^* \iff \forall x\;: f(x) \leq g(x).
\]
The  $\V$-functor $f: X \to Y$ is \emph{fully faithful} if $f^* \cdot f_* = 1_X^*$; equivalently, if $a(x,x') = b\big(f(x), f(x')\big)$ for all $x,x' \in X$.

Via
\begin{align*}
  \varphi: X &\modto Y\\[-4mm]
  \rule{1.3cm}{0.3pt}&\rule{1cm}{0.3pt}\\[-1mm]
  X^{\op}\otimes Y &\to \V\\[-4mm]
  \rule{2.2cm}{0.3pt}&\rule{2cm}{0.3pt}\\[-1mm]
  \yy_{\varphi}: Y \to (X^{\op} &\ihom \V) =: \hat{X},
\end{align*}
every $\V$-module $\varphi$ corresponds to its \emph{Yoneda mate} $\yy_{\varphi}$ in $\VCat$. In particular, $a = 1_X^*$ corresponds to the Yoneda functor
\[
    \yy_X = \yy_{1_X^*}: X \to \hat{X}.
\]
For every $\V$-functor $f: X^{\op} \to \V$ and $x \in X$ one has
\[
    1_{\hat{X}}^* (\yy_X(x), f) = f(x) \;\;\; \text{ \emph{(Yoneda Lemma)}}.
\]
In particular, $1_{\hat{X}}^*\big(\yy_X(x), \yy_X(x')\big) =
a(x,x')$, i.e. $\yy_X$ is fully faithful.

The correspondence between $\varphi$ and $\yy_{\varphi}$ gives:
\begin{prop}\label{prop:adjunction to star}
    $(-)^*: \big(\VCat\big)^{\op} \to \VMod$ has a left adjoint
    $\hat{(-)}$, given by
    \[
    \hat{\varphi}(s)(x) = \bigvee_{y \in Y} \varphi(x,y) \otimes s(y)
    \]
    for all $\varphi: X \modto Y$, $s \in \hat{Y}$, $x \in X$.
\end{prop}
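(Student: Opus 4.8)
The plan is to recognize the Yoneda correspondence $\varphi \mapsto \yy_{\varphi}$ as the hom-set bijection of the adjunction and then to check naturality. Since $(-)^*$ is identically on objects, establishing $\hat{(-)} \dashv (-)^*$ amounts to producing, for each pair of $\V$-categories $X$ and $Z$, a bijection
\[
    \VMod(X, Z) \;\cong\; \VCat(Z, \hat{X}) \;=\; \VCat^{\op}(\hat{X}, Z)
\]
natural in $X \in \VMod$ and $Z \in \VCat^{\op}$. The bijection itself is already in hand: by the symmetric monoidal-closed structure of $\VCat$ one has $\VMod(X, Z) = (X^{\op} \otimes Z) \ihom \V \cong Z \ihom (X^{\op} \ihom \V) = Z \ihom \hat{X}$, whose underlying set is $\VCat(Z, \hat{X})$, and concretely this sends a module $\varphi$ to its Yoneda mate $\yy_{\varphi}$. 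So the actual work splits into (i) confirming that $\hat{(-)}$ is a functor acting on morphisms by the stated formula, and (ii) verifying naturality in both variables.

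First I would record that the stated formula is simply module composition. Identifying $\hat{X}$ with $\VMod(X, E)$ (since $X^{\op} \otimes E \cong X^{\op}$, a presheaf $s : X^{\op} \to \V$ is exactly a module $X \modto E$), one has $\hat{\varphi}(s) = s \cdot \varphi$ for $\varphi : X \modto Y$ and $s \in \hat{Y}$. That $\hat{\varphi} : \hat{Y} \to \hat{X}$ is a $\V$-functor then follows immediately from the fact, recorded in Section 2, that $\VMod$ is $\VCat$-enriched, so that composition is a $\V$-functor and hence so is the operation $s \mapsto s \cdot \varphi$ of composing with the fixed module $\varphi$ (using $k \leq 1^*_{\VMod(X,Y)}(\varphi, \varphi)$). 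Functoriality, $\widehat{\psi \cdot \varphi} = \hat{\varphi} \circ \hat{\psi}$ and $\hat{1_X^*} = 1_{\hat{X}}$, is then just associativity and the unit law for module composition, so $\hat{(-)} : \VMod \to \VCat^{\op}$ is a genuine functor.

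For naturality I would verify the two defining squares by direct computation with the composition formula. Naturality in the $\VMod$-variable is the identity $\yy_{\psi \cdot \alpha} = \hat{\alpha} \circ \yy_{\psi}$ for $\alpha : X \modto X'$ and $\psi : X' \modto Z$; expanding both sides gives $\bigvee_{x'} \alpha(x, x') \otimes \psi(x', z)$, and it is exactly this computation that forces the stated formula for $\hat{\alpha}$ on morphisms. Naturality in the $\VCat^{\op}$-variable is the identity $\yy_{f^* \cdot \psi} = \yy_{\psi} \circ f$ for a $\V$-functor $f : Z' \to Z$; here one uses $f^*(z,z') = c(z, f(z'))$ (with $c$ the structure of $Z$) together with the module law $c \cdot \psi = \psi$ to reduce both sides to $\psi(x, f(z'))$.

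I expect the only genuinely delicate point to be the bookkeeping of the two opposite-category conventions, namely the variance of $(-)^*$ and the identification $\VCat^{\op}(\hat{X}, Z) = \VCat(Z, \hat{X})$, so that the two naturality squares are written with the correct direction of composition. Once this is pinned down, every verification collapses to associativity of module composition and the module axioms $c \cdot \psi = \psi = \psi \cdot a$, leaving no further obstacle.
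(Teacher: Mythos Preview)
Your proof is correct and complete, but it takes a genuinely different route from the paper's. The paper establishes the adjunction via the \emph{unit and universal property}: it identifies the unit $\eta_X = (\yy_X)_* : X \modto \hat{X}$ explicitly (via the Yoneda Lemma), then shows by a direct elementwise calculation that for every $\varphi: X \modto Y$ the Yoneda mate $\yy_\varphi$ is the unique $\V$-functor $\Phi: Y \to \hat{X}$ with $\Phi^* \cdot \eta_X = \varphi$; the formula for $\hat{\varphi}$ is then read off as the mate of $\eta_Y \cdot \varphi$. You instead verify the \emph{hom-set bijection with naturality} directly, and your key conceptual move---identifying $\hat{X}$ with $\VMod(X,E)$ so that $\hat{\varphi}(s) = s \cdot \varphi$ is literally module composition---lets functoriality and $\V$-functoriality of $\hat{\varphi}$ fall out of associativity and the $\VCat$-enrichment of $\VMod$ recorded in Section~2, with no further computation. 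Your argument is arguably more economical and more structural; on the other hand, the paper's approach produces the unit $\eta_X = (\yy_X)_*$ explicitly, and this is exactly what is needed immediately afterwards (Remarks~\ref{remarks_one}) to describe the induced monad $\PP_\V$ and its Kock--Z\"oberlein property, and again in Section~3 for the Extension Theorem.
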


\begin{proof}
    Under the correspondence
    \[
        \frac{\varphi: X \modto Y}{\Phi:Y \to \hat{X}}
    \]
    given by $\varphi(x,y) = \Phi(y)(x)$, $\Phi = 1_{\hat{X}}$ gives the unit $\eta_X: X \modto \hat{X}$ of the adjunction, with
    \[
        \eta_X(x,t) = t(x)
    \]
    for all $x \in X$, $t \in \hat{X}$. Note that one has $\eta_X = ( \yy_X)_*$, by the Yoneda Lemma. We must confirm that $\yy_{\varphi}$ is indeed the unique $\V$-functor $\Phi:Y \to \hat{X}$ with $\Phi^* \cdot \eta_X = \varphi$. But any such $\Phi$ must satisfy
    \begin{eqnarray*}
        \varphi(x,y) &= & \big( \Phi^* \cdot (\yy_X)_* \big) (x,y) \\
        & =& \bigvee_{t \in\hat{X}} 1^*_{\hat{X}} \big( \yy_X(x), t) \otimes
        1_{\hat{X}}^*\big(t, \Phi(y)\big)\\
        & \leq & \bigvee_{t \in\hat{X}} 1_{\hat{X}}^*\big(\yy_X(x),
        \Phi(y)\big) \\
        &\leq & \Phi(y)(x) \\
        & \leq & 1_{\hat{X}}^*\big(\yy_X(x), \yy_X(x)\big) \otimes
         1_{\hat{X}}^*\big(\yy_X(x), \Phi(y)\big)\\
        & \leq & \big( \Phi^* \cdot (\yy_X)_* \big) (x,y) \\
        & =& \varphi(x,y)
    \end{eqnarray*}
    for all $x \in X, y \in Y$. Hence, necessarily $\Phi =
    \yy_{\varphi}$, and the same calculation shows $\varphi =
    (\yy_{\varphi})^* \cdot \eta_X$. Now, $\hat{\varphi}: \hat{Y} \to  \hat{X}$ is the $\V$-functor corresponding to $\eta_Y \cdot \varphi$, hence
    \[
        \hat{\varphi}(s)(x) = (\eta_Y \cdot \varphi)(x,s) =
        \bigvee_{ y \in Y}\varphi(x,y) \otimes s(y),
    \]
    for all $s
    \in \hat{Y}$, $x \in X$.
\end{proof}
\begin{remarks}\label{remarks_one}
    \begin{enumerate}
     \item For $\varphi: X \modto Y$, the $\V$-functor $\hat{\varphi}$ may also be described as the left Kan extension of $\yy_{\varphi}: Y \to \hat{X}$ along $\yy_Y: Y \to \hat{Y}$.
     \item The adjunction of Proposition \ref{prop:adjunction to star} is in fact 2-categorical. It therefore induces a 2-monad     $\PP_{\V} = (P_{\V}, \yy, \mm)$ on $\VCat$, with
        \[
            P_{\V}X = \hat{X} = X \ihom \V, \;\; P_{\V}f =
            \widehat{f^*}:\hat{X} \to \hat{Y}
        \]
        for $f: X \to Y = (Y,b)$, where
        \[
            \widehat{f^*}(t)(y)= \bigvee_{x \in X} b\big(y, f(x)\big) \otimes t(x)
        \]
        for $t \in \hat{X}$, $y \in Y$. This monad is of Kock-Z\"oberlein type, i.e. one has
        \[
          \widehat{\yy_X^*} \leq \yy_{\hat{X}}: \hat{X} \to \hat{\hat{X}}.
        \]
        In fact, for all $x,y \in X = (X,a)$, and $t,s \in \hat{X}$ one has $a(x,y) \leq s(x) \ihom s(y)$, hence
        \[
            t(y) \otimes \big( t(y) \ihom a(x,y)\big) \otimes s(x) \leq
            a(x,y)\otimes s(x) \leq s(y),
        \]
        which gives
        \begin{eqnarray*}
            \widehat{\yy_X^*}(s)(t) &=& \bigvee_x \yy_X^*(t,x) \otimes s(x) \\
            & = & \bigvee_x \bigwedge_y \big( t(y) \ihom a(x,y) \big)
            \otimes s(x) \\
            & \leq & \bigwedge_y t(y) \ihom s(y) = \yy_{\hat{X}}(s)(t).
        \end{eqnarray*}
    \item  The adjunction of Proposition \ref{prop:adjunction to star} induces also a monad on $\VMod$ which we will not consider further in this paper. But see Section 5 below.
    \item  Because of (2), the Eilenberg-Moore category
        \[
            (\VCat)^{\PP_{\V}}
        \]
        has $\V$-categories $X$ as objects which come equipped with a $\V$-functor $\alpha: \hat{X} \to X$ with $\alpha \cdot \yy_X = 1_X$ and $1_{\hat{X}} \leq \yy_X \cdot \alpha$, e.g $\V$-categories $X$ for which $\yy_X$ has a left adjoint. These are known to be the $\V$-categories that have all
        \emph{weighted colimits} (see \cite{Ke}), with $\alpha$ providing a choice of such colimits. Morphisms in $(\VCat)^{\PP_\V}$ must preserve the (chosen) weighted colimits.
     \item In case $\V = \two$, $\PP_{\V}X$ can be identified with the set $P_{\downarrow}X$ of down-closed subsets of the (pre)ordered set $X$, and the Yoneda functor $X \to P_{\downarrow}X$ sends $x$ to its down-closure $\downarrow x$. Note that $P_{\downarrow}X$ is the ordinary power set $PX$ of $X$ when $X$ is discrete. $\Ord^{\PP_\downarrow}$ has complete ordered sets as objects, and its morphisms must preserve suprema. Hence, this is the category $\Sup$ of so-called  \emph{sup-lattices} (with no anti-symmetry condition).
    \end{enumerate}
\end{remarks}

    Next we prove a general extension theorem for endofunctors of $\VCat$. While maintaining its effect on objects, we wish extend any functor $K$ defined for $\V$-functors to $\V$-modules. To this end we observe that for a $\V$-module $\varphi:X \modto Y$, the left triangle of
    \[
        \xymatrix{
            & \hat{Y} \ar[dr]^{\hat{\varphi}} & & & \hat{Y} \ar[dr]^{\hat{\varphi}} & \\
            Y \ar[ur]^{\yy_Y} \ar[rr]_{\yy_{\varphi}} & & \hat{X} & Z
            \ar[ur]^{\yy_{\psi}} \ar[rr]_{\yy_{\psi \cdot \varphi}}
            & & \hat{X}
        }
    \]
    commutes, since $\yy_Y$ is the counit of the adjunction of
    Proposition \ref{prop:adjunction to star}. More generally, the right triangle commutes for every $\psi: Y \modto Z$.

\begin{theorem}[Extension Theorem]\label{theo_one}
    For every functor $K: \VCat \to \VCat$,
    \[
        \tilde{K}\varphi := \big( \xymatrix{ KX \ar[r]|{\circ}^{(K\yy_X)_*} & K\hat{X} \ar[r]|{\circ}^{(K\yy_{\varphi})^*} & KY}\big)
    \]
    defines a lax functor $\tilde{K}: \VMod \to \VMod$ which
    coincides with $K$ on objects. Moreover, if $K$ preserves full
    fidelity of $\V$-functors, the diagram
    \[
        \xymatrix {
            \VMod \ar[r]^{\tilde{K}} & \VMod \\
            (\VCat)^{\op} \ar[u]^{(-)^*} \ar[r]^{K^{\op}} &
            (\VCat)^{\op} \ar[u]_{(-)^*}\\
        }
    \]
    commutes.
\end{theorem}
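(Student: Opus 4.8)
The plan is to verify directly the two structure inequalities of a lax functor, drawing on the adjunctions $(Kf)_* \dashv (Kf)^*$ induced by every $\V$-functor $f$ and on the commutativity of the two triangles displayed just before the theorem. Since the hom-categories of $\VMod$ are posets, any two parallel $2$-cells coincide, so all coherence diagrams among the structure $2$-cells commute automatically; only the comparison inequalities $1_{KX}^* \leq \tilde{K}(1_X^*)$ and $\tilde{K}\psi \cdot \tilde{K}\varphi \leq \tilde{K}(\psi \cdot \varphi)$ need to be established.

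For the unit, I would use $\yy_{1_X^*} = \yy_X$, so that $\tilde{K}(1_X^*) = (K\yy_X)^* \cdot (K\yy_X)_*$ and the required inequality is exactly the unit $1_{KX}^* \leq (K\yy_X)^* \cdot (K\yy_X)_*$ of the adjunction $(K\yy_X)_* \dashv (K\yy_X)^*$. For composition, take $\varphi: X \modto Y$ and $\psi: Y \modto Z$ and expand $\tilde{K}\psi \cdot \tilde{K}\varphi = (K\yy_\psi)^* \cdot (K\yy_Y)_* \cdot (K\yy_\varphi)^* \cdot (K\yy_X)_*$. The left triangle gives $\yy_\varphi = \hat{\varphi} \cdot \yy_Y$, hence $(K\yy_\varphi)^* = (K\yy_Y)^* \cdot (K\hat{\varphi})^*$; substituting and collapsing the middle factor via the counit inequality $(K\yy_Y)_* \cdot (K\yy_Y)^* \leq 1_{K\hat{Y}}^*$ gives $\tilde{K}\psi \cdot \tilde{K}\varphi \leq (K\yy_\psi)^* \cdot (K\hat{\varphi})^* \cdot (K\yy_X)_*$. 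Finally the right triangle $\hat{\varphi} \cdot \yy_\psi = \yy_{\psi \cdot \varphi}$, together with functoriality of $K$ and contravariance of $(-)^*$, recombines $(K\yy_\psi)^* \cdot (K\hat{\varphi})^*$ into $(K\yy_{\psi\cdot\varphi})^*$, so the right-hand side is precisely $\tilde{K}(\psi \cdot \varphi)$.

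For the commutativity of the square I must show $\tilde{K}(g^*) = (Kg)^*$ for every $\V$-functor $g: X \to Y$ with $Y = (Y,b)$. The key computation is that the Yoneda mate of $g^*: Y \modto X$ is $\yy_{g^*} = \yy_Y \cdot g$, read off from $\yy_{g^*}(x)(y) = g^*(y,x) = b(y, g(x)) = \yy_Y(g(x))(y)$. Applying $K$ gives $K\yy_{g^*} = K\yy_Y \cdot Kg$, hence $\tilde{K}(g^*) = (Kg)^* \cdot (K\yy_Y)^* \cdot (K\yy_Y)_*$. Since $\yy_Y$ is fully faithful, preservation of full fidelity by $K$ makes $K\yy_Y$ fully faithful, i.e. $(K\yy_Y)^* \cdot (K\yy_Y)_* = 1_{KY}^*$, so the trailing factors collapse and $\tilde{K}(g^*) = (Kg)^*$.

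The manipulations are formal once the directions are fixed, so the main obstacles are bookkeeping: keeping the variances of $(-)_*$ and $(-)^*$ straight and composing modules in the correct order. The two genuinely content-bearing inputs are the counit inequality $(K\yy_Y)_* \cdot (K\yy_Y)^* \leq 1^*$, which is exactly what forces the composition comparison to be lax rather than strict, and the identity $\yy_{g^*} = \yy_Y \cdot g$ combined with preservation of full fidelity, which is what upgrades laxness to strict commutativity on the image of $(-)^*$. I would confirm at the outset that the lax-functor convention in force makes both comparison $2$-cells point towards $\tilde{K}$ of the composite (as the order computations indeed confirm), since an oplax reading would demand the reversed inequalities and fail here.
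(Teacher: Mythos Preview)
Your proof is correct and follows essentially the same route as the paper: the unit comparison via the adjunction $(K\yy_X)_* \dashv (K\yy_X)^*$, the composition comparison via the factorization $\yy_\varphi = \hat{\varphi}\cdot\yy_Y$ together with the counit inequality $(K\yy_Y)_*\cdot(K\yy_Y)^* \leq 1^*$ (which the paper packages as ``by adjunction''), and the square via $\yy_{g^*} = \yy_Y \cdot g$ plus preservation of full fidelity. The only cosmetic difference is that you argue upward from $\tilde{K}\psi \cdot \tilde{K}\varphi$ while the paper argues downward from $\tilde{K}(\psi\cdot\varphi)$.
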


\begin{proof}
    Lax functoriality of $\tilde{K}$ follows from
    \begin{eqnarray*}
        \tilde{K}(1_X^*) &=& (K\yy_X)^* \cdot (K\yy_X)_* \geq
        1_{KX}^*,\\
        \tilde{K}(\psi\cdot\varphi) &=& (K\yy_{\psi \cdot
        \varphi})^* \cdot (K\yy_X)_* \\
        &=& (K\yy_{\psi})^* \cdot (K \hat{\varphi})^* \cdot (K
        \yy_X)_* \\
        & \geq & (K\yy_{\psi})^* \cdot (K\yy_Y)_* \cdot (K
        \yy_{\varphi})^* \cdot (K\yy_X)_* \\
        &=& \tilde{K}\psi \cdot \tilde{K}\varphi,
    \end{eqnarray*}
    since $\yy_{\varphi} = \hat{\varphi} \cdot \yy_Y$ implies
    $(K\yy_{\varphi})^* = (K\yy_Y)^* \cdot (K\hat{\varphi})^*$, hence $(K\hat{\varphi})^* \geq (K\yy_Y)_* \cdot (K\yy_{\varphi})^*$ by adjunction.

    For a $\V$-functor $f:X \to Y$, the triangle
    \[
        \xymatrix{
            & Y \ar[dr]^{\yy_Y} & \\
            X \ar[ru]^f \ar[rr]_{\yy_{f^*}} & & \hat{Y}.
        }
    \]
    commutes, so that
    \[
        \tilde{K}(f^*) = (K \yy_{f^*})^* \cdot (K\yy_Y)_* = (Kf)^*(K\yy_Y)^*(K\yy_Y)_* \geq (Kf)^*,
    \]
    and one even has $\tilde{K}(f^*) = (Kf)^*$ if $K$ preserves the full fidelity of $\yy_Y$.
\end{proof}

\section{The Hausdorff Monad on $\VCat$}

Let $X = (X,a)$ be a $\V$-category. Then $\hat{X} = (X^{\op} \ihom \V) = P_{\V}X$ is closed under suprema formed in the product $\V^X$; hence, like $\V$ it is a sup-lattice. Consequently, the Yoneda functor $\yy_X: X \to \hat{X}$ factors uniquely through the free sup-lattice $PX$, by a sup-preserving
map $\YY_X:PX \to P_{\V}X$:
\[
    \xymatrix{
        X \ar[r]^{\{-\}} \ar[dr]_{\yy_X} & PX \ar[d]^{\YY_X} & B \ar@{|->}[d] \\
        & P_{\V} X & a(-, B)
    }
\]
where
\[
    a(x, B) = \bigvee_{y \in B} a(x,y)
\]
for all $x \in X$, $B \subseteq X$. We can provide the set $PX$ with a $\V$-category structure $h_X$ which it inherits from $\PV X$ (since the forgetful functor $\VCat \to \Set$ is a fibration, even a topological functor, see \cite{CHT}).
Hence, for subsets $A, B \subseteq X$ one puts
\[
    h_X(A,B) = \bigwedge_{z \in X} a(z, A) \ihom a(z, B).
\]
\begin{lemma}\label{lemma_one}
\[
    h_X(A,B) = \bigwedge_{x \in A} \bigvee_{y \in B} a(x,y).
\]
\end{lemma}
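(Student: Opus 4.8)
The plan is to prove the identity $h_X(A,B) = \bigwedge_{x \in A} \bigvee_{y \in B} a(x,y)$ by establishing the two inequalities separately, exploiting the definition $h_X(A,B) = \bigwedge_{z \in X} \big( a(z,A) \ihom a(z,B) \big)$ together with the adjunction property $z \otimes x \leq y \iff z \leq x \ihom y$ of the internal hom in $\V$. Denote the right-hand side by $R := \bigwedge_{x \in A} \bigvee_{y \in B} a(x,y)$ and the left-hand side by $L := h_X(A,B)$.

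For the inequality $L \leq R$, the idea is that taking the meet over all $z \in X$ can only be smaller than any particular instance, and the natural instances to test are $z = x$ for $x \in A$. First I would observe that for each $x \in A$ one has $k \leq a(x,x) \leq a(x,A)$, so that $a(x,A) \ihom a(x,B) \leq a(x,B)$ (since $k \otimes (a(x,A) \ihom a(x,B)) = a(x,A) \ihom a(x,B)$ combined with $a(x,A) \geq k$ forces the hom to lie below $a(x,B)$ via the adjunction). Since $a(x,B) = \bigvee_{y \in B} a(x,y)$ by definition, and since $L$ is a meet over all $z$ hence below the $z = x$ term, I obtain $L \leq a(x,A) \ihom a(x,B) \leq \bigvee_{y \in B} a(x,y)$ for every $x \in A$. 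Taking the meet over $x \in A$ then yields $L \leq R$.

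For the reverse inequality $R \leq L$, the plan is to show that $R$ lies below each factor $a(z,A) \ihom a(z,B)$, which by the adjunction is equivalent to showing $R \otimes a(z,A) \leq a(z,B)$ for every $z \in X$. Using that $\otimes$ preserves suprema, I would expand $R \otimes a(z,A) = \bigvee_{x \in A} R \otimes a(z,x)$, so it suffices to verify $R \otimes a(z,x) \leq a(z,B)$ for each fixed $x \in A$. Here I bound $R \leq \bigvee_{y \in B} a(x,y)$ (the $x$-th term of the meet defining $R$), so that $R \otimes a(z,x) \leq a(z,x) \otimes \bigvee_{y \in B} a(x,y) = \bigvee_{y \in B} a(z,x) \otimes a(x,y) \leq \bigvee_{y \in B} a(z,y) = a(z,B)$, where the last inequality uses the composition axiom $a(z,x) \otimes a(x,y) \leq a(z,y)$ for the $\V$-category $X$. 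Taking the meet over $z$ gives $R \leq L$.

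The main obstacle, though a mild one, is keeping the direction of the adjunction and the use of suprema-preservation of $\otimes$ straight when manipulating the internal hom $\ihom$; everything else is a routine unwinding of definitions. The genuinely essential ingredient is the composition inequality of the $\V$-category in the second step, which is exactly what converts the abstract formula into the familiar pointwise infimum-of-suprema shape, so I would make sure that step is stated explicitly rather than absorbed into the calculation.
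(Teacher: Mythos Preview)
Your proof is correct and follows essentially the same approach as the paper's. Both arguments specialize the meet over $z \in X$ to $z = x \in A$ (using $k \leq a(x,A)$) for the inequality $h_X(A,B) \leq \bigwedge_{x\in A} a(x,B)$, and both establish the reverse inequality by showing $a(z,A)\otimes v \leq a(z,B)$ via the composition axiom $a(z,x)\otimes a(x,y)\leq a(z,y)$; the only cosmetic difference is that the paper deduces $a(x,A)\ihom a(x,B)\leq a(x,B)$ from antitonicity of $\ihom$ in its first variable (i.e.\ $a(x,A)\ihom a(x,B)\leq k\ihom a(x,B)=a(x,B)$), whereas you obtain it from the counit inequality of the adjunction combined with $k\leq a(x,A)$.
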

\begin{proof}
    From $k \leq a(x,A)$ for all $x \in A$ one obtains
    \[
        h_X(A,B) \leq \bigwedge_{x \in A} a(x,A) \ihom a(x, B) \leq \bigwedge_{x \in A} k \ihom a(x, B) = \bigwedge_{x \in A} a(x,B).
    \]
    Conversely, with $ v:= \displaystyle{\bigwedge_{x \in A} \bigvee_{y \in B} a(x,y)}$, we must show $v \leq a(z,A) \ihom a(z,B)$ for all $z \in X$. But since for every $x \in A$
    \[
        a(z,x) \otimes v \leq a(z,x) \otimes \bigvee_{y \in B} a(x,y) = \bigvee_{y \in B} a(z,x) \otimes a(x,y) \leq a(z,B),
    \]
    one concludes $a(z,A) \otimes v \leq a(z, B)$, as desired.
\end{proof}

For a $\V$-functor $f: X \to Y =(Y,b)$ one now concludes easily
\[
    h_X(A,B) \leq \bigwedge_{x \in A} \bigvee_{y \in B} b\big(f(x), f(y) \big) = h_Y \big(f(A), f(B)\big)
\]
for all $A,B \subseteq X$. Consequently, with
\[
    HX = (PX, h_X), \;\; Hf: HX \to HY, \;\; A \mapsto f(A),
\]
one obtains a (2-)functor $H$ which makes the diagram
\[
    \xymatrix {
        \VCat \ar[r]^H \ar[d] & \VCat \ar[d] \\
        \Set \ar[r]^P & \Set
    }
\]
commute. Actually, one has the following theorem:

\begin{theorem}\label{theo_two}
    The powerset monad $\PP = (P, \{-\}, \bigcup)$ can be lifted along the forgetful functor $\VCat \to \Set$ to a monad $\HH$ of $\VCat$ of Kock-Z\"oberlein type.
\end{theorem}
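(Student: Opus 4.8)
The plan is to observe that $H$ already lifts the functor $P$ along the forgetful functor $U\colon\VCat\to\Set$, and that $P$ underlies the set-level monad $\PP$, so it suffices to check that each structure map lifts to a $\V$-functor and that the Kock--Z\"oberlein inequality holds; the monad axioms then transfer automatically. Concretely I would verify: (i) the unit $\{-\}_X\colon X\to HX$ is a $\V$-functor; (ii) the multiplication $\bigcup_X\colon HHX\to HX$ is a $\V$-functor; (iii) $H\{-\}_X\le\{-\}_{HX}$, matching the orientation used for $\PP_\V$ in Remarks~\ref{remarks_one}(2).

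For (i), Lemma~\ref{lemma_one} gives $h_X(\{x\},\{y\})=a(x,y)$, so $\{-\}_X$ is in fact fully faithful. Step (ii) is the one carrying genuine content. Writing $h_{HX}$ for the Hausdorff structure on the underlying set $PX$ of $HX=(PX,h_X)$, Lemma~\ref{lemma_one} applied to $HX$ yields
\[
    h_{HX}(\A,\B)=\bigwedge_{A\in\A}\bigvee_{B\in\B}h_X(A,B),
\]
while a second application gives $h_X(\bigcup\A,\bigcup\B)=\bigwedge_{x\in\bigcup\A}\bigvee_{y\in\bigcup\B}a(x,y)$. I must show the former is below the latter. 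Fixing $x\in\bigcup\A$, say $x\in A_0\in\A$, I would restrict the outer meet on the left to its $A_0$-term and then, expanding $h_X(A_0,B)=\bigwedge_{x'\in A_0}\bigvee_{y\in B}a(x',y)$, restrict the inner meet to $x'=x$; this bounds the left-hand side by $\bigvee_{B\in\B}\bigvee_{y\in B}a(x,y)=\bigvee_{y\in\bigcup\B}a(x,y)$, which is the $x$-component of the right-hand side. Taking the meet over all $x\in\bigcup\A$ gives the desired inequality, so $\bigcup_X$ is a $\V$-functor.

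With unit and multiplication lifted over their set-level counterparts, the naturality squares and the three monad identities become equations in $\VCat$ whose underlying maps already hold in $\Set$ (where $\PP$ is a monad); since $U$ is faithful, they hold in $\VCat$, so $\HH=(H,\{-\},\bigcup)$ lifts $\PP$. For (iii) I would compute, using Lemma~\ref{lemma_one} on $HX$, that for each $A\subseteq X$
\[
    h_{HX}\big(\{\{x\}:x\in A\},\{A\}\big)=\bigwedge_{x\in A}h_X(\{x\},A)=\bigwedge_{x\in A}\bigvee_{y\in A}a(x,y)\ge\bigwedge_{x\in A}a(x,x)\ge k,
\]
taking $y=x$ in each inner join; hence $H\{-\}_X(A)=\{\{x\}:x\in A\}\le\{A\}=\{-\}_{HX}(A)$ in $HHX$, which is exactly the Kock--Z\"oberlein condition. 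The main (if mild) obstacle is step (ii): the care needed is that the outer meet on the left indexes over \emph{members} $A$ of $\A$ while on the right it indexes over \emph{points} $x$ of $\bigcup\A$, which is why selecting one $A_0\ni x$ and then the single inner point $x'=x$ is exactly what the estimate requires.
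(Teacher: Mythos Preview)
Your proposal is correct and essentially identical to the paper's proof: the paper also verifies full fidelity of $\{-\}_X$, proves $\V$-functoriality of $\bigcup_X$ by bounding $h_{HX}(\A,\B)\le a(x,\bigcup\B)$ for each $x\in A\in\A$ via the same two restriction steps, and derives the Kock--Z\"oberlein inequality from $k\le h_X(\{x\},A)$. The only difference is cosmetic: you make explicit the appeal to faithfulness of $U$ for the monad laws, which the paper leaves implicit.
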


\begin{proof}
    For a $\V$-category $X$, $x \mapsto \{x\}$ gives a fully faithful $\V$-functor $\{-\}: X \to HX$. In order to show that
    \[
        \bigcup: HHX \to HX, \;\; \mathcal{A} \mapsto \bigcup\mathcal{A},
    \]
    is a $\V$-functor, it suffices to verify that for all $x \in A \in \mathcal{A} \in HHX$ and $\mathcal{B} \in HHX$ one has
    \[
        h_{HX} (\A, \B) \leq a(x, \bigcup \B).
    \]
    But for all $B \in \B$ we have
    \[
        h_X(A,B) \leq a(x,B) \leq a(x, \bigcup \B),
    \]
    so that
    \[
        h_{HX}(\A, \B) \leq \bigvee_{B \in \B} h_X(A,B) \leq a(x, \bigcup \B).
    \]
    The induced order of $HX$ is given by
    \[
        A \leq B \iff \forall x \in A: k \leq a(x,B),
    \]
    and that of $HHX$ by
    \[
        \A \leq \B \iff \forall A \in \A: \; k \leq \bigvee_{B \in \B} h_X(A,B).
    \]
    Hence, from $k \leq a(x,A) = h_X(\{x\}, A)$ for all $A \in HX$ one obtains
    \[
        \big\{ \{x\}\;|\; x \in A \big\} \leq \{A\}
    \]
    in $HHX$, which means $H\{-\}_X \leq \{-\}_{HX}$, i.e., $\HH$ is Kock-Z\"oberlein.
\end{proof}

\begin{remarks}\label{remarks_two}
    \begin{enumerate}
        \item By definition, $h_X(A,B)$ depends only on $a(-, A)$, $a(-, B)$. Hence, if we put
            \begin{eqnarray*}
                \Downarrow_X B &:=& \big\{ x \in X\;|\; \{x\} \leq B \big \} = \{x \in X \;|\; \downarrow x \leq B \}\\
                &=& \{ x \in X\;|\; k \leq a(x, B) \},
            \end{eqnarray*}
            from $B \subseteq \Downarrow_X B$ one trivially has $a(z, B) \leq a(z, \Downarrow_X B)$ for all $z \in X$, but also
            \begin{eqnarray*}
                a(z, \Downarrow_XB) &=& \bigvee_{x \in \Downarrow_X B} a(z,x) \otimes k \\
                &\leq& \bigvee_{z \in \Downarrow_X B} \bigvee_{y \in B} a(z,x) \otimes a(x,y) \leq a(z,B).
            \end{eqnarray*}
            Consequently,
            \[
                h_X(A,B) = h_X(\Downarrow_X A, \Downarrow_X B).
            \]
            This equation also implies $\Downarrow_X \Downarrow_X B = \Downarrow_X B$.
        \item $\Downarrow_X B$ of (1) must not be confused with the down-closure $\downarrow_X B$ of $B$ in $X$ w.r.t the induced order of $X$, e.g. with
            \[
                \downarrow_X B = \{x \in X\;|\; \exists y \in B\;\; x \leq y\} = \{ x \in X\;|\; \exists y \in B \; (k \leq a(x,y)) \}.
            \]
            In general, $B \subseteq \downarrow_X B \subseteq \Downarrow_X B$. While $\downarrow_XB = \Downarrow_X B$ for $\V = \two$, the two sets are generally distinct even for $\V = \PP_+$.

            \item In the induced order of $HX$ one has
            \[
                A \leq B \iff A \subseteq \Downarrow_XB.
            \]
            Hence, if we restrict $HX$ to
            \[
                H_{\Downarrow} X:= \{ B\subseteq X\;|\; B = \Downarrow_X B\},
            \]
            the induced order of $H_{\Downarrow}X$ is simply the inclusion order. $H_{\Downarrow}$ becomes a functor $H_{\Downarrow}: \VCat \to \VCat$  with
            \[
                (H_{\Downarrow}f)(A) = \Downarrow_Yf(A)
            \]
            for all $A \in H_{\Downarrow}X$, and there is a lax
            natural transformation $\iota: H_{\Downarrow} \to H$
            given by inclusion functions.
             Like $H$, also $H_{\Downarrow}$ carries a monad structure, given by
            \begin{align*}
                    X \to H_{\Downarrow}X,\;\; x \mapsto \downarrow_X x =  \Downarrow_X{x}, \\
                    H_{\Downarrow}H_{\Downarrow}X \to H_{\Downarrow}X, \;\; \B \mapsto \Downarrow_X (\bigcup \B).
            \end{align*}
            %The resulting submonad $\HH_{\Downarrow}$ of $\HH$ should be considered as a lax lifting of $\PP_{\downarrow}$ (see Remarks 1 (5)) along the induced-order functor $\VCat \to \Ord$, in the sense that there is a natural transformation
            %\[
            %    \xymatrix{
            %    \VCat \ar[d] \ar[r]^{H_{\Downarrow}} \ar@{}[dr]|{\Downarrow_2} & \VCat \ar[d] \\
            %    \Ord \ar[r]^{P_{\downarrow}} & \Ord
            %    }
            %\]
            %which is compatible with the monad structures: $\iota_X : H_{\Downarrow}X \to P_{\downarrow}X$, $B \mapsto B$ is simply an order-embedding since both orders are given by inclusion.
            In this way $\iota: \HH_{\Downarrow} \to \HH$ becomes a
            lax monad morphism.

            \item By definition, $\yy_X$ is fully faithful. Hence, $HX$ carries  the largest $\V$-category structure making $\yy_X: HX \to \PV X$ a $\V$-functor. Equivalently, this is the largest $\V$-category structure making
                \[
                    \delta_X: X \modto HX
                \]
                with $\delta(x,B) = a(x,B)$ a $\V$-module.

            \item $\YY_X: HX \to \PV X$ defines a morphism $\HH \to \PV$ of monads. Indeed, the left diagram of
                \[
                    \xymatrix{
                        & X \ar[dl]_{\{-\}} \ar[dr]^{\yy_X} & & HHX \ar[d]_{\bigcup} \ar[r]^{H\YY_X} & H\PV X \ar[r]^{\YY_{\hat{X}}}& \PV\PV X \ar[d]^{\mm_X} \\
                        HX \ar[rr]_{\YY_X} & & \PV X & HX \ar[rr]_{\YY_X}  & & \PV X
                    }
                \]
                commutes trivially, and for the right one first observes that $\mm_X: \hat{\hat{X}} \to \hat{X}$ is defined by
                \[
                    \mm_X(\tau)(x) = \widehat{\eta_X}(\tau)(x) = \bigvee_{t \in \hat{X}} t(x) \otimes \tau(t)
                \]
                for all $\tau \in \hat{\hat{X}}$, $x \in X$. Hence, for $\B \in HHX$ we have:
                \begin{eqnarray*}
                    (\mm_X \cdot \YY_{\hat{X}} \cdot H\YY_X)(\B)(x) &=& \bigvee_{t \in \hat{X}} t(x) \otimes \YY_{\hat{X}}\big(\YY_X(\B)\big) \\
                    & = & \bigvee_{t \in \hat{X}} t(x) \otimes 1_{\hat{X}}^*\big(t, \YY_X(\B)\big)\\
                    &=& \bigvee_{t \in \hat{X}} \bigvee_{B \in \B} t(x) \otimes \big( \bigwedge_{x'\in X} t(x') \ihom a(x', B) \big) \\
                    &\leq& \bigvee_{B \in \B} \bigvee_{t \in \hat{X}} t(x) \otimes \big( t(x) \ihom a(x,B) \big)\\
                    & = & \bigvee_{B \in \B} a(x,B) \\
                    &=& \YY_X (\bigcup \B)(x) \\
                    & \leq & a(x,x) \otimes \bigvee_{B \in \B} 1_{\hat{X}}^*\big(\yy_X(x), a(-, B) \big) \\
                    & \leq& \bigvee_{t \in \hat{X}} t(x) \otimes 1_{\hat{X}}^* \big(t, \YY_X(\B)\big) \\
                    &=&(\mm_X \cdot \YY_{\hat{X}} \cdot H\YY_X)(\B)(x).
                \end{eqnarray*}
                Consequently, there is an induced algebraic functor
                \[
                    (\VCat)^{\HH} \to (\VCat)^{\PP_V}
                \]
                of the respective Eilenberg-Moore categories.
    \end{enumerate}
\end{remarks}

We briefly describe the Eilenberg-Moore category
\[
    (\VCat)^{\HH}
\]
where objects $X \in \VCat$ come equipped with a $\V$-functor $\alpha: HX \to X$ satisfying $\alpha \cdot \{-\} = 1_X$ and $1_{HX} \leq \{-\}\cdot \alpha$ (since $\HH$ is Kock-Z\"oberlein). Hence, $\alpha(\{x\}) = x$ for all $x \in X$, and $A \leq \{\alpha(A)\}$ for $A \in HX$, that is:
\[
    k \leq h_X(A, \{\alpha(A) \}) = \bigwedge_{x \in A} a\big(x, \alpha(A)\big).
\]
Consequently, $\alpha(A)$ is an upper bound of $A$ in the induced order of $X$, and for any other upper bound $y$ of $A$ in $X= (X,a)$ one has
\[
     k \leq \bigwedge_{x \in A} a(x,y) = h_X(A, \{y\}) \leq a\big(\alpha(A), \alpha(\{y\})\big) = a\big(\alpha(A), y \big)
\]
since $\alpha$ is a $\V$-functor. Hence, $\alpha(A)$ gives (a choice of) a supremum of $A$ in $X$. Moreover, the last computation shows
\begin{equation}\label{eq: definition of H-algebras}
    a(\bigvee A, y) = \bigwedge_{x \in A} a(x,y)
\end{equation}
for all $y \in X, A \in HX$ (since ``$\leq$'' holds trivially). Conversely, any $\V$-category $X = (X,a)$ which is complete in its induced order and satisfies (\ref{eq: definition of H-algebras}) is easily seen to be an object of $(\VCat)^{\HH}$.
\begin{corollary}\label{coro_one}
    The Eilenberg-Moore category of $\HH$ has order-complete $\V$-categories $X = (X,a)$ satisfying (\ref{eq: definition of H-algebras}) as its objects, and morphisms are $\V$-functors preserving (the chosen) suprema.
\end{corollary}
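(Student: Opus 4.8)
The plan is to read off both claims---the description of objects and of morphisms---from the analysis already carried out in the paragraph immediately preceding the statement, organizing it into the two directions of an iff and then treating morphisms separately. The decisive structural input is that $\HH$ is of Kock-Z\"oberlein type (Theorem \ref{theo_two}): this is what collapses the full list of Eilenberg-Moore axioms to the two conditions $\alpha \cdot \{-\} = 1_X$ and $1_{HX} \leq \{-\}\cdot\alpha$ on a $\V$-functor $\alpha: HX \to X$, and it is what forces the structure map to be essentially unique. So I would first record that, for a KZ monad, giving an algebra structure on $X$ amounts to exhibiting $\alpha$ as a left adjoint of the unit $\{-\}: X \to HX$ satisfying $\alpha\cdot\{-\} = 1_X$, the associativity law being then automatic.

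For the forward direction I would unwind these two inequalities. From $\alpha\cdot\{-\}=1_X$ one gets $\alpha(\{x\})=x$; from $1_{HX}\leq\{-\}\cdot\alpha$ one gets $A\leq\{\alpha(A)\}$ in the induced order of $HX$, which by Lemma \ref{lemma_one} reads $k\leq\bigwedge_{x\in A}a(x,\alpha(A))$, so that $\alpha(A)$ is an upper bound of $A$. Using that $\alpha$ is a $\V$-functor one then squeezes, for any upper bound $y$, the chain $k\leq h_X(A,\{y\})\leq a(\alpha(A),y)$, identifying $\alpha(A)$ as a least upper bound; the same computation yields (\ref{eq: definition of H-algebras}), since the reverse inequality follows from monotonicity. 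Hence every algebra is order-complete and satisfies (\ref{eq: definition of H-algebras}).

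For the converse I would start from an order-complete $X$ satisfying (\ref{eq: definition of H-algebras}) and \emph{define} $\alpha(A):=\bigvee A$. The one genuine verification---the step I expect to be the main, if modest, obstacle---is that this $\alpha$ is indeed a $\V$-functor $HX\to X$, i.e. $h_X(A,B)\leq a(\bigvee A,\bigvee B)$: here one feeds (\ref{eq: definition of H-algebras}) (with $y=\bigvee B$) together with the obvious estimate $a(x,\bigvee B)\geq\bigvee_{y\in B}a(x,y)$ into the formula of Lemma \ref{lemma_one}. The identities $\alpha\cdot\{-\}=1_X$ and $1_{HX}\leq\{-\}\cdot\alpha$ are then immediate from the universal property of suprema, and KZ-ness of $\HH$ supplies the remaining algebra axiom for free, so no separate associativity check is needed.

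Finally, for morphisms I would simply evaluate the homomorphism equation $f\cdot\alpha=\alpha'\cdot Hf$ at an arbitrary $A\in HX$: since $Hf$ acts by direct image, so that $Hf(A)=f(A)$, and both structure maps are suprema, this reads $f(\bigvee A)=\bigvee f(A)$. This exhibits the algebra morphisms as exactly the sup-preserving $\V$-functors, completing the identification of $(\VCat)^{\HH}$.
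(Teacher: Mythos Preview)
Your proposal is correct and follows exactly the approach the paper takes in the paragraph preceding the corollary: reduce via the Kock--Z\"oberlein property to the two conditions on $\alpha$, read off that $\alpha(A)$ is a supremum satisfying (\ref{eq: definition of H-algebras}), and conversely define $\alpha(A)=\bigvee A$. The paper leaves the converse direction and the description of morphisms as ``easily seen'', so you have in fact filled in precisely the details the paper omits; nothing is missing or different.
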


\section{The lax Hausdorff monad on $\VMod$}

When applying Theorem \ref{theo_one} to the Hausdorff functor $H: \VCat \to \VCat$ of Theorem \ref{theo_two} we obtain a lax functor $\tilde{H}: \VMod \to \VMod$ whose value on a $\V$-module $\varphi: X \modto Y$ may be easily computed:
\begin{lemma}\label{lemma_two}
\[
    \tilde{H}\varphi(A,B) = \bigwedge_{x \in A} \bigvee_{y \in B} \varphi(x,y)
\]
for all subsets $A \subseteq X, B\subseteq Y$.
\end{lemma}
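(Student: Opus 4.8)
The plan is to read off $\tilde H\varphi$ directly from the Extension Theorem \ref{theo_one}, which gives
\[
    \tilde H\varphi = (H\yy_{\varphi})^* \cdot (H\yy_X)_* : HX \modto HY,
\]
and then to collapse this composite using the good behaviour of representable modules. First I would record the two legs elementwise. Writing $h_{\hat X}$ for the Hausdorff structure on $H\hat X$ and using the general formulas $g_*(u,v) = b(g(u),v)$ and $h^*(v,w) = b(v,h(w))$ for the adjoint modules of a $\V$-functor (with target structure $b$), together with the fact that $H$ acts on functors by taking images, I get
\[
    (H\yy_X)_*(A,\mathcal T) = h_{\hat X}(\yy_X(A), \mathcal T), \qquad (H\yy_{\varphi})^*(\mathcal T, B) = h_{\hat X}(\mathcal T, \yy_{\varphi}(B))
\]
for $A \in HX$, $B \in HY$, $\mathcal T \in H\hat X$, where $\yy_X(A) = \{\yy_X(x) \mid x \in A\}$ and $\yy_{\varphi}(B) = \{\yy_{\varphi}(y) \mid y \in B\}$.

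Next I would compute the module composite over $H\hat X$. By definition of composition in $\VMod$,
\[
    \tilde H\varphi(A,B) = \bigvee_{\mathcal T \in H\hat X} h_{\hat X}(\yy_X(A), \mathcal T) \otimes h_{\hat X}(\mathcal T, \yy_{\varphi}(B)),
\]
and this supremum telescopes: each summand is bounded above by $h_{\hat X}(\yy_X(A), \yy_{\varphi}(B))$ by transitivity of the $\V$-category structure $h_{\hat X}$ on $H\hat X$, while the summand at $\mathcal T = \yy_X(A)$ already attains this bound since $k \leq h_{\hat X}(\yy_X(A), \yy_X(A))$. Hence $\tilde H\varphi(A,B) = h_{\hat X}(\yy_X(A), \yy_{\varphi}(B))$. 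This is just the standard identity $h^* \cdot g_* = b_V(g(-), h(-))$ for the composite of representable modules of two $\V$-functors $g, h$ into a common target $V$, applied to $g = H\yy_X$ and $h = H\yy_{\varphi}$.

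Finally I would evaluate $h_{\hat X}(\yy_X(A), \yy_{\varphi}(B))$ by invoking the two results already available. Lemma \ref{lemma_one} rewrites the Hausdorff distance as
\[
    h_{\hat X}(\yy_X(A), \yy_{\varphi}(B)) = \bigwedge_{x \in A} \bigvee_{y \in B} 1_{\hat X}^*\big(\yy_X(x), \yy_{\varphi}(y)\big),
\]
where I re-index the outer meet over $x \in A$ and the inner join over $y \in B$ (legitimate because the value at each image element is independent of the chosen preimage). The Yoneda Lemma then identifies $1_{\hat X}^*(\yy_X(x), \yy_{\varphi}(y)) = \yy_{\varphi}(y)(x) = \varphi(x,y)$, which yields exactly $\tilde H\varphi(A,B) = \bigwedge_{x \in A}\bigvee_{y\in B}\varphi(x,y)$. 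I expect the only genuinely delicate point to be the telescoping step, where one must confirm that passing through the intermediate $\V$-category $H\hat X$ introduces nothing beyond $h_{\hat X}(\yy_X(A), \yy_{\varphi}(B))$; everything else is a direct substitution of the formulas for $\yy_X$, $\yy_{\varphi}$, and the adjoint modules.
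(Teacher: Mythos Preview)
Your proposal is correct and follows essentially the same route as the paper's proof: both expand $\tilde H\varphi(A,B)$ as the join over $\mathcal T \in H\hat X$ of $h_{\hat X}(\yy_X(A),\mathcal T)\otimes h_{\hat X}(\mathcal T,\yy_{\varphi}(B))$, collapse it to $h_{\hat X}(\yy_X(A),\yy_{\varphi}(B))$ via transitivity and the choice $\mathcal T=\yy_X(A)$, and then apply Lemma~\ref{lemma_one} and the Yoneda Lemma. The paper presents this as a single chain of (in)equalities rather than naming the telescoping step, but the argument is the same.
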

\begin{proof}
    \begin{eqnarray*}
        \tilde{H}\varphi(A,B) &=& \bigvee_{D \in H\hat{X}} (H\yy_X)_*(A, D)
        \otimes (H\yy_{\varphi})^*(D,B) \\
        &=& \bigvee_{D \in H\hat{X}} h_{\hat{X}} \big(\yy_X(A), D\big) \otimes h_{\hat{X}}\big(D, \yy_{\varphi}(B)\big)\\
        & \leq & h_{\hat{X}} \big( \yy_X (A), \yy_{\varphi}(B) \big)\\
        &=& \bigwedge_{x \in A} \bigvee_{y \in B} 1_{\hat{X}}^*\big(\yy_X(x), \yy_{\varphi}(y) \big)\\
        &=& \bigwedge_{x \in A} \bigvee_{y \in B} \varphi(x,y) \;\;\;\; (Yoneda) \\
        &=& h_{\hat{X}} \big( \yy_X(A), \yy_{\varphi}(B) \big) \\
        & \leq & h_{\hat{X}} \big( \yy_X(A), \yy_X(A)\big) \otimes h_{\hat{X}}\big(\yy_X(A), \yy_{\varphi}(B)\big)\\
        & \leq & \bigvee_{D \in H\hat{X}} h_{\hat{X}} \big(\yy_X(A), D \big) \otimes h_{\hat{X}} \big(D, \yy_{\varphi}(B)\big) \\
        & = & \tilde{H} \varphi(A,B).
    \end{eqnarray*}
\end{proof}
We now prove that $\tilde{H}$ carries a lax monad structure.
\begin{theorem}\label{theo_three}
    $\tilde{H}$ belongs to a lax monad $\tilde{\HH} = (\tilde{H}, \delta, \nu)$ of $\VMod$ such that $\HH$ of Theorem \ref{theo_two} is a lifting of $\tilde{\HH}$ along $(-)_*: \VCat \to \VMod$.
\end{theorem}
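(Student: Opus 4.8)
The plan is to define the unit and multiplication of $\tilde{\HH}$ as the images under $(-)_*\colon\VCat\to\VMod$ of those of $\HH$: put $\delta_X:=(\{-\}_X)_*\colon X\modto HX$ and $\nu_X:=(\bigcup_X)_*\colon HHX\modto HX$. From $f_*=b\cdot f$ one computes $\delta_X(x,B)=a(x,B)$, recovering the module of Remarks~\ref{remarks_two}(4), and $\nu_X(\A,B)=h_X(\bigcup\A,B)=\bigwedge_{A\in\A}h_X(A,B)$. The entire argument hinges on the identity
\[
    \tilde{H}(f_*)=(Hf)_*\qquad(\text{and dually }\tilde{H}(f^*)=(Hf)^*)
\]
for every $\V$-functor $f\colon X\to Y$. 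This drops out of Lemma~\ref{lemma_two}, since
\[
    \tilde{H}(f_*)(A,B)=\bigwedge_{x\in A}\bigvee_{y\in B}b\big(f(x),y\big)=h_Y\big(f(A),B\big)=(Hf)_*(A,B);
\]
alternatively it is the instance of Theorem~\ref{theo_one} for $K=H$, once one notes that $H$ preserves full fidelity (if $b(f(x),f(x'))=a(x,x')$ then $h_Y(f(A),f(B))=h_X(A,B)$, repeated arguments being absorbed by the meet).

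This identity says precisely that $\tilde{H}\circ(-)_*=(-)_*\circ H$, so that $(-)_*$ carries $\HH$ to $\tilde{\HH}$; together with $\delta=(\{-\})_*$ and $\nu=(\bigcup)_*$ this is exactly the assertion that $\HH$ is a lifting of $\tilde{\HH}$ along $(-)_*$. The monad laws for $\tilde{\HH}$ then come for free. Indeed, $\delta_X$ and $\nu_X$ are themselves lower-star modules, so the identity gives $\tilde{H}\delta_X=(H\{-\}_X)_*$ and $\tilde{H}\nu_X=(H\bigcup_X)_*$; applying the functor $(-)_*$ to the Eilenberg--Moore equalities
\[
    \bigcup\nolimits_X\cdot H\{-\}_X=1_{HX}=\bigcup\nolimits_X\cdot\{-\}_{HX},\qquad \bigcup\nolimits_X\cdot H\bigcup\nolimits_X=\bigcup\nolimits_X\cdot\bigcup\nolimits_{HX}
\]
of $\HH$ in $\VCat$ then yields $\nu\cdot(\tilde{H}\delta)=1=\nu\cdot(\delta\tilde{H})$ and $\nu\cdot(\tilde{H}\nu)=\nu\cdot(\nu\tilde{H})$ in $\VMod$. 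Thus the monad axioms hold strictly, and the only genuine laxness of $\tilde{\HH}$ is that of the lax functor $\tilde{H}$.

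What is left, and where the actual work lies, is to verify that $\delta$ and $\nu$ are natural with respect to arbitrary $\V$-modules $\varphi\colon X\modto Y$ --- a condition that the monad structure of $\HH$ on $\VCat$ does not see, since it only concerns $\V$-functors. For $\delta$ I expect strict naturality: evaluating $A=\{x\}$ and using $k\le a(x,x)$ gives ``$\ge$'', while the module laws $\varphi\cdot a\le\varphi$ and $b\cdot\varphi\le\varphi$ give ``$\le$'', both composites collapsing to $(x,B)\mapsto\bigvee_{y\in B}\varphi(x,y)$. The naturality of $\nu$ is the main obstacle, requiring a comparison of $\tilde{H}\varphi\cdot\nu_X$ with $\nu_Y\cdot\tilde{H}\tilde{H}\varphi$ as modules $HHX\modto HY$ and forcing the iterated formula $\tilde{H}\tilde{H}\varphi(\A,\B)=\bigwedge_{A\in\A}\bigvee_{B\in\B}\tilde{H}\varphi(A,B)$ through a module composite. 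I would organize this around the observation that $\tilde{H}\varphi$ is itself a $\V$-module, so that its module laws $\tilde{H}\varphi\cdot h_X=\tilde{H}\varphi=h_Y\cdot\tilde{H}\varphi$ apply: evaluating the intermediate supremum at $\bigcup\A$ (for one composite) and at the one-element family $\{C\}$ (for the other) produces the ``$\ge$'' inequalities, while these module laws together with $\nu_X(\A,A')=h_X(\bigcup\A,A')$ produce the matching ``$\le$'' inequalities, so that both composites equal $\tilde{H}\varphi(\bigcup\A,C)=\bigwedge_{x\in\bigcup\A}\bigvee_{z\in C}\varphi(x,z)$ and $\nu$ is in fact strictly natural. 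Should any of these reductions deliver only an inequality, the surviving one is exactly the coherence $2$-cell of the lax monad.
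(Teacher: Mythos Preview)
Your proposal is correct and follows essentially the same route as the paper: define $\delta=\{-\}_*$, $\nu=\bigcup_*$, verify $\tilde{H}(f_*)=(Hf)_*$ via Lemma~\ref{lemma_two} so that the monad laws transfer from $\HH$ by functoriality of $(-)_*$, and then check naturality of $\delta,\nu$ against an arbitrary module $\varphi$ by explicit computation. The one small difference is your choice of witness in the $\nu$-naturality step: to show $\tilde{H}\varphi(\bigcup\A,B)\le(\nu_Y\cdot\tilde{H}\tilde{H}\varphi)(\A,B)$ you evaluate the intermediate supremum at the singleton $\{B\}\in HHY$, whereas the paper uses the full powerset $HB$; your choice is in fact slightly cleaner, since $\tilde{H}\tilde{H}\varphi(\A,\{B\})=\bigwedge_{A\in\A}\tilde{H}\varphi(A,B)$ is exactly the desired value, and $\nu_Y(\{B\},B)=h_Y(B,B)\ge k$ immediately.
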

\begin{proof}
    Let us first note that $H$ is a lifting of $\tilde{H}$ along $(-)_*$, in the sense that
    \[
        \xymatrix{
            \VCat \ar[r]^H \ar[d]_{(-)_*} & \VCat \ar[d]^{(-)_*} \\
            \VMod \ar[r]^{\tilde{H}} & \VMod
        }
    \]
    commutes. Indeed, for $f: X \to Y=(Y,b)$ in $\VCat$ and $A \in HX$, $B \in HY$ one has
    \[
        \tilde{H}(f_*)(A,B) = \bigwedge_{x \in A} \bigvee_{y \in B}b\big(f(x),y\big) = h_Y\big(f(A), B) \big) = (Hf)_* (A,B).
    \]
    The unit of $\tilde{\HH}$, $\delta:1 \to \tilde{H}$, is defined by
    \[
        \delta_X = \{-\}_*: X \modto HX, \;\; \delta_X(x,B) = h_X(\{x\}, B) = a(x,B),
    \]
    for $X = (X,a)$, $x \in X$, $B \in HX$ (see also Remarks \ref{remarks_two} (2)), and the multiplication $\nu: \tilde{H}\tilde{H} \to \tilde{H}$ can be given by
    \[
        \nu_X = \bigcup{}_*: HHX \modto HX, \;\; \nu_X(\A, B) = h_X(\bigcup\A, B) = \bigwedge_{A \in \A} h_X(A,B),
    \]
    for $\A \in  HHX$, $B \in HX$. The monad conditions hold \emph{strictly} for $\tilde{\HH}$, because they hold strictly for $\HH$. For example, $\nu \cdot \tilde{H}\delta = 1$ follows from
    \[
        \nu_X \cdot \tilde{H}\delta_X = \bigcup{}_* \cdot \tilde{H}(\{-\}_*) = \bigcup{}_* \cdot (H\{-\})_* = (\bigcup \cdot H\{-\})_* = 1_X^*.
    \]
    Surprisingly though, also the naturality squares for both $\delta_X$ and $\nu_X$ commute \emph{strictly}. Indeed, for $\varphi: X \modto Y = (Y,b)$, $x \in X$, $B \in HY$ and $\A \in HHX$ one has:
    \begin{eqnarray*}
        (\tilde{H}\varphi \cdot \delta_X)(x,B) &=& \bigvee_{A \in HX} \delta_X(x,A) \otimes \tilde{H}\varphi(A,B) \\
        & = & \bigvee_{A \in HX} h_X(\{x\}, A) \otimes \tilde{H}\varphi(A,B) \\
        & = & \tilde{H} \varphi(\{x\}, B) \\
        & = & \bigvee_{y \in B} \varphi(x,y) \\
        & = & \bigvee_{y \in B} \bigvee_{z \in Y} \varphi(x,z) \otimes b(z,y) \\
        &= & \bigvee_{z \in Y} \varphi(x,z) \otimes \big( \bigvee_{y \in B} b(z,y) \big)\\
        & = & \bigvee_{z \in Y} \varphi(x,z) \otimes \delta_Y(z,B) \\
        & = & (\delta_Y \cdot \varphi)(x, B), \\
        (\tilde{H}\varphi \cdot \nu_X)(\A, B) &= &\bigvee_{A \in HX} \nu_X(\A, A) \otimes \tilde{H} \varphi(A,B) \\
        & = & \bigvee_{A \in HX} h_X(\bigcup\A, A) \otimes \tilde{H}\varphi(A,B) \\
        & = & \tilde{H}\varphi(\bigcup \A, B) \\
        & \leq & \big( \bigwedge_{A \in \A} \bigvee_{B' \in HB} \tilde{H}\varphi(A,B') \big) \otimes \bigwedge_{B' \in HB} h_Y(B',B)\\
         &&(\text{since } k \leq h_Y(B',B) \text{ for } B' \in HB)\\
        & \leq & \bigvee_{\B \in HHY} \big( \bigwedge_{A \in \A} \bigvee_{B' \in \B} \tilde{H}\varphi (A,B') \big) \otimes \big( \bigwedge_{B' \in \B} h_Y(B', B) \big)\\
        & = & (\nu_Y \cdot \tilde{H}\tilde{H}\varphi)(\A, B) \\
        &=& \bigvee_{\B \in HHY} \tilde{H}\tilde{H}\varphi(\A, \B) \otimes \nu_Y(\B, B) \\
        & \leq & \bigvee_{\B \in HHY} \bigwedge_{A \in \A} \tilde{H}\varphi(A, \bigcup\B) \otimes h_Y(\bigcup\B, B) \\
        & \leq & \bigwedge_{A \in \A} \tilde{H}\varphi(A,B)\\
        &= & (\tilde{H}\varphi \cdot \nu_X)(\A, B).
    \end{eqnarray*}
\end{proof}

\begin{remarks}\label{remarks_three}
    \begin{enumerate}
        \item We emphasize that, while $\tilde{H}$ is only a lax functor, this is in fact the only defect that prevents $\tilde{\HH}$ from being a monad in the strict sense.

        \item In addition to the commutativity of the diagram given in the Proof of Theorem \ref{theo_three}, since $H$ obviously preserves full fidelity of $\V$-functors, from Theorem \ref{theo_one} we obtain also the commutativity of
            \[
                \xymatrix{
                    (\VCat)^{\op} \ar[r]^{H^{\op}} \ar[d]_{(-)^*} & (\VCat)^{\op} \ar[d]^{(-)^*} \\
                    \VMod \ar[r]^{\tilde{H}} & \VMod
                }
            \]
        \item If $\V$ is \emph{constructively completely distributive} (see \cite{Wood}, \cite{CH04}), then $\tilde{H}\varphi$ for $\varphi: X \modto Y$ may be rewritten as
            \[
                \tilde{H}\varphi(A,B) = \bigvee\{v \in \V\;|\; \forall x \in A\; \exists y \in B\;: \; v \leq\varphi(x,y) \}
            \]
            In this form the lax functor $\tilde{H}$ was first considered in \cite{CH04}. In the presence of the Axiom of Choice, so that $\V$ is completely distributive in the ordinary (non-constructive) sense, one can then Skolemize the last formula to become
            \[
                \tilde{H}\varphi(A,B) = \bigvee_{f: A \to B} \bigwedge_{x \in A} \varphi(x, f(x));
            \]
            here the supremum ranges over arbitrary set mappings $f:A \to B$. Hence, the $\bigwedge\bigvee$-formula of Lemma \ref{lemma_two} has been transcribed rather compactly in $\bigvee \bigwedge$-form.
    \end{enumerate}
\end{remarks}
For the sake of completeness we determine the Eilenberg-Moore algebras of $\tilde{\HH}$, i.e., those $\V$-categories $X = (X,a)$ which come equipped with a $\V$-module $\alpha: HX \modto X$ satisfying
\begin{align}
    \alpha \cdot \delta_X = 1_X^* (=a) \label{eq: equation a}\\
    \alpha \cdot \nu_X = \alpha \cdot \tilde{H}\alpha  \label{eq: equation b}
\end{align}
The left-hand sides of those equations are easily computed as
\begin{eqnarray*}
    (\alpha \cdot \delta_X) (x,y) &=& \bigvee_{B \in HX} \delta_X(x,B) \otimes \alpha(B,y) \\
    & = & \bigvee_{B \in HX} h_X(\{x\}, B) \otimes \alpha(B,y) \\
    &=& \alpha(\{x\}, y),\\
    (\alpha \cdot \nu_X) (\A, y) &=& \bigvee_{B \in HX} \nu_X (\A, B) \otimes \alpha(B,y) \\
    &=& \bigvee_{B \in HX} h_X(\bigcup\A, B) \otimes \alpha(B, y) \\
    &=& \alpha( \bigcup\A, y),
\end{eqnarray*}
for all $x,y \in X$, $\A \in HHX$. Furthermore, if $k \leq \alpha (\{x\},x)$, for all $x \in X$, then
\begin{eqnarray*}
    \alpha(\bigcup\A, y) &\leq& \bigwedge_{A \in \A} \alpha(A,y) \\
    &=& \tilde{H}\alpha (\A, \{y\}) \otimes k\\
    & \leq & \tilde{H} \alpha (\A, \{y\}) \otimes \alpha(\{y\}, y) \\
    & \leq & \bigvee_{B \in HX} \tilde{H} \alpha(\A, B) \otimes \alpha(B,y) \\
    &=& \alpha \cdot \tilde{H}\alpha(\A,y).
\end{eqnarray*}
Consequently, (\ref{eq: equation a}) and (\ref{eq: equation b}) imply $\alpha( \{x\}, y) = a(x,y)$ and then
\begin{eqnarray*}
    \alpha(A,y) &=& \alpha\Big(\bigcup\big\{\{x\} \;|\; x \in A \big\}, y\Big) \\
    &=& \bigwedge_{x \in A} \alpha(\{x\}, y)\\
    & = & h_X(A, \{y\}) = \{-\}^*(A,y)
\end{eqnarray*}
for all $A \in HX$, $y \in X$. Hence, necessarily $\alpha = \{-\}^*$; conversely, this choice for $\alpha$ satisfies (\ref{eq: equation a}) and (\ref{eq: equation b}).
\begin{corollary}\label{coro_two}
    The category of strict $\tilde{\HH}$-algebras and lax homomorphisms is the category $\VMod$ itself.
\end{corollary}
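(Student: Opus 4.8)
The plan is to read the statement as the identification of two categories and to split the work accordingly into objects and morphisms. The computation immediately preceding the statement already shows that a $\V$-category $X=(X,a)$ carries exactly one strict $\tilde{\HH}$-algebra structure, namely $\alpha_X=\{-\}^*:HX\modto X$ with $\alpha_X(A,y)=h_X(A,\{y\})=\bigwedge_{x\in A}a(x,y)$; so $X\mapsto(X,\alpha_X)$ is a bijection between $\V$-categories and strict $\tilde{\HH}$-algebras. Hence the only genuine content concerns morphisms: I must show that a $\V$-module $\varphi:X\modto Y$ is the same datum as a lax homomorphism $(X,\alpha_X)\to(Y,\alpha_Y)$, i.e. that the lax square
\[
    \varphi\cdot\alpha_X\;\leq\;\alpha_Y\cdot\tilde{H}\varphi
\]
holds automatically for \emph{every} $\varphi$, with no side condition imposed on $\varphi$.

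For this key inequality I would avoid the $\bigwedge\bigvee$-formula of Lemma \ref{lemma_two} and argue purely $2$-categorically in $\VMod$. Two ingredients suffice: first, the adjunction $\delta_X=\{-\}_*\dashv\{-\}^*=\alpha_X$, whose unit is an \emph{equality} $\alpha_X\cdot\delta_X=1_X^*$ because $\{-\}:X\to HX$ is fully faithful (as recorded in the proof of Theorem \ref{theo_two}), and whose counit is the inequality $\delta_X\cdot\alpha_X\leq 1_{HX}^*$; second, the \emph{strict} naturality of the unit established in Theorem \ref{theo_three}, namely $\delta_Y\cdot\varphi=\tilde{H}\varphi\cdot\delta_X$. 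The zig-zag then reads
\[
    \varphi\cdot\alpha_X=\alpha_Y\cdot\delta_Y\cdot\varphi\cdot\alpha_X=\alpha_Y\cdot\tilde{H}\varphi\cdot\delta_X\cdot\alpha_X\leq\alpha_Y\cdot\tilde{H}\varphi,
\]
where the first equality inserts $1_Y^*=\alpha_Y\cdot\delta_Y$, the second substitutes the strict naturality $\delta_Y\cdot\varphi=\tilde{H}\varphi\cdot\delta_X$, and the final step applies the counit $\delta_X\cdot\alpha_X\leq 1_{HX}^*$. This establishes that every $\V$-module is a lax homomorphism, and conversely a lax homomorphism is by definition a $\V$-module.

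It then remains to check that the categorical structure matches that of $\VMod$: the identity on $(X,\alpha_X)$ is the identity $1_X^*=a$ of $X$ in $\VMod$, and composition of lax homomorphisms is just composition of the underlying modules; together with the object bijection above this exhibits $(X,\alpha_X)\mapsto X$, $\varphi\mapsto\varphi$ as an isomorphism of categories onto $\VMod$. The main obstacle, I expect, is not the verification itself but getting the \emph{direction} of the lax condition right and recognizing that it is automatic: demanding the reverse inequality $\alpha_Y\cdot\tilde{H}\varphi\leq\varphi\cdot\alpha_X$ (or equality) would cut the morphisms down to a proper subclass, as one already sees for $\V=\two$, where $\alpha_Y\cdot\tilde H\varphi$ can be strictly larger than $\varphi\cdot\alpha_X$ when $A$ has no upper bound. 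The clean route through the zig-zag hinges entirely on the fact that $\delta$ is \emph{strictly} (not merely laxly) natural, which is exactly the point supplied by Theorem \ref{theo_three}; without it one would obtain only a lax square at the level of $\delta$ and the argument would not close.
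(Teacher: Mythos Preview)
Your argument is correct. The paper's own proof is a single sentence that records the definition of a lax homomorphism as a $\V$-module $\varphi$ satisfying $\varphi\cdot\alpha\leq\beta\cdot\tilde H\varphi$ and then defers to ``a straightforward calculation'' to show every $\varphi$ satisfies it; you have supplied exactly that calculation.

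Your route is slightly more conceptual than a direct elementwise verification: rather than expanding both sides via Lemma~\ref{lemma_two} and the formula $\alpha_X(A,y)=\bigwedge_{x\in A}a(x,y)$, you exploit the adjunction $\{-\}_*\dashv\{-\}^*$ together with the \emph{strict} naturality of $\delta$ from Theorem~\ref{theo_three}. This is arguably the cleaner way to see why the inequality is automatic, since it makes transparent that the only non-formal input is the full faithfulness of $\{-\}$ (giving $\alpha_Y\cdot\delta_Y=1_Y^*$) and the strictness of $\delta$'s naturality square; the element-level verification, by contrast, requires choosing a witness $B$ (e.g.\ a singleton) in the join defining $\alpha_Y\cdot\tilde H\varphi$. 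Both approaches prove the same inequality and neither needs anything beyond what the paper has already established.
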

\begin{proof}
    A lax homomorphism is, by definition, a $\V$-module $\varphi: X \modto Y$ with $\varphi \cdot \alpha \leq \beta \cdot \tilde{H} \varphi$ (where $\alpha, \beta$ denote the uniquely determined structures of $X$, $Y$, respectively). A straightforward calculation shows that every $\V$-module satisfies this inequality.
\end{proof}

\section{The Gromov structure for $\V$-categories}

    With $\tilde{H}$ as in Section 5, one defines
    \[
        GH(X,Y) := \bigvee_{\varphi: X \submodto Y} \tilde{H} \varphi(X,Y)
    \]
    for all $\V$-categories $X$ and $Y$. Since for $\V$-functors $f:X' \to X$ and $g: Y' \to Y$ one has
    \[
        (g^* \cdot \varphi \cdot f_*) (x', y') = \varphi\big(f(x'), g(y')\big)
    \]
    for all $x' \in X, y' \in Y$, with Lemma \ref{lemma_two} one obtains immediately
    \[
        GH(X,Y) = GH(X',Y')
    \]
    whenever $f$, $g$ are isomorphisms.

    \begin{prop}\label{prop_two}
        $GH$ is a $\V$-category structure for isomorphism classes of $\V$-categories.
    \end{prop}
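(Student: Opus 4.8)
The plan is to verify the two defining conditions of a $\V$-category structure recorded in Section 2, read off for the ``set'' of isomorphism classes of $\V$-categories with structure $GH$: the unit inequality $k \leq GH(X,X)$ and the composition inequality $GH(X,Y) \otimes GH(Y,Z) \leq GH(X,Z)$. Well-definedness of $GH$ on isomorphism classes has already been observed in the paragraph preceding the statement, so it need not be revisited. Note also that symmetry and separation are not part of the claim, and so are deliberately left aside here.

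For the unit, I would simply test the defining supremum at the identity module. Since $1_X^* = a$ is a module $X \submodto X$, Lemma \ref{lemma_two} gives $\tilde{H}(1_X^*)(X,X) = \bigwedge_{x \in X} \bigvee_{y \in X} a(x,y)$, and the reflexivity $k \leq a(x,x)$ of the $\V$-category $X = (X,a)$ forces each inner supremum to dominate $k$. Hence $\tilde{H}(1_X^*)(X,X) \geq k$, and therefore $GH(X,X) \geq k$, the identity module being one of the terms over which the defining supremum is formed.

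For composition, the key is that $\tilde{H}$ is only \emph{laxly} functorial and that this laxness points in exactly the direction needed. Because $\otimes$ preserves suprema in each variable, I can first rewrite $GH(X,Y) \otimes GH(Y,Z)$ as the single supremum $\bigvee_{\varphi,\psi} \tilde{H}\varphi(X,Y) \otimes \tilde{H}\psi(Y,Z)$, taken over all modules $\varphi: X \submodto Y$ and $\psi: Y \submodto Z$. For a fixed pair I would bound the summand in three moves. Evaluating the composite module $\tilde{H}\psi \cdot \tilde{H}\varphi$ at $(X,Z)$ and retaining only the single term indexed by $B = Y \in HY$ yields $(\tilde{H}\psi \cdot \tilde{H}\varphi)(X,Z) \geq \tilde{H}\varphi(X,Y) \otimes \tilde{H}\psi(Y,Z)$. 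Lax functoriality from Theorem \ref{theo_one} then gives $\tilde{H}(\psi \cdot \varphi) \geq \tilde{H}\psi \cdot \tilde{H}\varphi$, so in particular $\tilde{H}(\psi \cdot \varphi)(X,Z) \geq \tilde{H}\varphi(X,Y) \otimes \tilde{H}\psi(Y,Z)$. Finally, since $\psi \cdot \varphi$ is itself a module $X \submodto Z$, its value $\tilde{H}(\psi \cdot \varphi)(X,Z)$ is one of the terms of the supremum defining $GH(X,Z)$, whence $\tilde{H}\varphi(X,Y) \otimes \tilde{H}\psi(Y,Z) \leq GH(X,Z)$. Taking the supremum over $\varphi$ and $\psi$ completes the composition inequality.

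The one step that deserves care is the orientation of the lax functoriality inequality: it is essential that $\tilde{H}(\psi \cdot \varphi) \geq \tilde{H}\psi \cdot \tilde{H}\varphi$ rather than the reverse, since the whole argument bounds a product of two individual Hausdorff values \emph{from above} by the Hausdorff value of a single composite module. Everything else is a routine use of the supremum-preservation of $\otimes$ together with the explicit formula of Lemma \ref{lemma_two}; the choice $B = Y$ in the composition supremum is precisely what allows the intermediate module $\psi \cdot \varphi$ to mediate between $X$ and $Z$ while using all of $Y$ as the carrier of the middle subset.
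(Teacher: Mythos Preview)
Your proposal is correct and follows essentially the same route as the paper: the unit is checked by evaluating at the identity module (the paper phrases this via the lax-unit inequality $1_{HX}^*\leq\tilde{H}1_X^*$ from Theorem~\ref{theo_one} rather than via Lemma~\ref{lemma_two}, but the content is identical), and composition is obtained by distributing $\otimes$ over the suprema, picking the term $B=Y$ in the composite $\tilde{H}\psi\cdot\tilde{H}\varphi$, invoking lax functoriality $\tilde{H}(\psi\cdot\varphi)\geq\tilde{H}\psi\cdot\tilde{H}\varphi$, and absorbing $\psi\cdot\varphi$ into the supremum defining $GH(X,Z)$.
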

    \begin{proof}
        Clearly
        \[
            k \leq 1_{HX}^*(X,X) \leq \tilde{H}1_X^* (X,X) \leq GH(X,X),
        \]
        and
        \begin{eqnarray*}
            GH(X,Y) \otimes GH(Y,Z) & = & \bigvee_{\varphi: X \submodto Y, \psi: Y \submodto Z} \tilde{H}\varphi(X,Y) \otimes \tilde{H}\psi(Y,Z) \\
            &\leq & \bigvee_{\varphi, \psi} \bigvee_{B \in HY} \tilde{H}\varphi(X,B) \otimes \tilde{H}\psi(B,Z) \\
            &\leq & \bigvee_{\varphi, \psi} (\tilde{H}\psi \cdot \tilde{H}\varphi) (X,Z)\\
            & \leq & \bigvee_{\varphi, \psi} \tilde{H}(\psi \cdot \varphi) (X,Z) \\
            & \leq & \bigvee_{ \chi: X \submodto Z} \tilde{H}{\chi}(X,Z)\\
            & = & GH(X,Z).
        \end{eqnarray*}
    \end{proof}
    We observe that the proof relies on the lax functoriality of
    $\tilde{H}$, but not on the actual definition of $\tilde{H}$ or
    $H$. Hence, instead of $H$ we may consider any \emph{sublifting}
    $ K:\VCat \to \VCat$ \emph{of the powerset functor}, by which we
    mean an endofunctor $K$ with $X \in KX \subseteq HX$ such that
    the inclusion functions
    \[
        \iota_X: KX \to HX
    \]
    form a lax natural transformation, e.g., they are $\V$-functors such
    that
    \[
        f(A) = (Hf)(A) \leq (Kf)(A)
    \]
    in $HY$, for all $\V$-functors $f:X \to Y$ and $A \in KX$. (We
    have encountered an example of this situation in Remarks \ref{remarks_two}(3),
    with $K = H_{\Downarrow}$.) In this situation we may replace
    $H$  by $K$ in the proof of Proposition \ref{prop_two} except that for the
    invariance under isomorphism we invoked in Lemma \ref{lemma_two}. But this
    reference may be avoided: one easily shows that the diagrams
    \[
        \xymatrix{
            X' \ar[r]^{\yy_{X'}} \ar[d]_f & \widehat{X'} \ar[d]^{\widehat{f^*}} & \widehat{X'} & Y' \ar[l]_{\yy_{g^* \cdot \varphi \cdot f_*}} \ar[d]^g \\
            X \ar[r]_{\yy_X} & \hat{X} & \hat{X} \ar[u]^{\widehat{f_*}} & Y \ar[l]^{\yy_{\varphi}}
        }
    \]
    commute, so that
    \begin{eqnarray*}
        \tilde{K}(g^* \cdot \varphi \cdot f_*) &= &(K\yy_{g^* \cdot \varphi \cdot f_*} )^* \cdot (K\yy_{X'})_* \\
        &=& (Kg)^* \cdot (K\yy_{\varphi})^* \cdot (K\widehat{f_*})^* \cdot (K\yy_{X'})_*,
    \end{eqnarray*}
    while
    \begin{eqnarray*}
        (Kg)^*\cdot\tilde{K}\varphi \cdot (Kf)_* & = & (Kg)^* \cdot (K\yy_{\varphi})^* \cdot (K\yy_X)_* \cdot (Kf)_* \\
        &=& (Kg)^* \cdot (K\yy_{\varphi})^* \cdot (K\widehat{f^*})_* \cdot (K\yy_{X'})_*.
    \end{eqnarray*}
    When $f$ is an isomorphism, one has $f_*^{-1} = f^*$. Consequently, in this case $(K\widehat{f_*})^* = (K\widehat{f^*})_*$, and then
    \[
        \tilde{K}(g^*\cdot \varphi \cdot f_*) = (Kg)^*\cdot\tilde{K}\varphi\cdot(Kf)_*.
    \]
    Hence, when for any sublifting $K$ of $P$ we put
    \[
        GK(X,Y) := \bigvee_{\varphi: X \submodto Y} \tilde{K}\varphi(X,Y),
    \]
    we may formulate Proposition \ref{prop_two} more generally as:
    \begin{theorem}\label{theo_four}
        $GK$ makes $\G := \ob(\VCat)/\cong$ a (large) $\V$-category, for every sublifting $K: \VCat \to \VCat$ of the powerset functor.
    \end{theorem}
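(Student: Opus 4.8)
The plan is to follow the proof of Proposition \ref{prop_two} essentially verbatim, replacing $H$ by $K$ throughout, and to verify that the only two ingredients that argument actually uses — the lax functoriality of $\tilde{K}$ (Theorem \ref{theo_one}) and the presence of the full underlying set as an object of $KX$ — are available for an arbitrary sublifting $K$. Well-definedness of $GK$ on $\G = \ob(\VCat)/\cong$ is precisely what the computation preceding the theorem delivers: for isomorphisms $f\colon X' \to X$ and $g\colon Y' \to Y$ we obtained the identity $\tilde{K}(g^* \cdot \varphi \cdot f_*) = (Kg)^* \cdot \tilde{K}\varphi \cdot (Kf)_*$, and since $\varphi \mapsto g^* \cdot \varphi \cdot f_*$ is a bijection between the module hom-sets (with inverse $\psi \mapsto g_* \cdot \psi \cdot f^*$), evaluating at the full sets — where $(Kf)(X')=X$ and $(Kg)(Y')=Y$ because $f,g$ are isomorphisms — yields $GK(X,Y) = GK(X',Y')$.

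For reflexivity I would argue as follows. Because $K$ is a sublifting of $P$ we have $X \in KX$, so the full set $X$ is a legitimate argument of the $\V$-relation $\tilde{K}\varphi$, and $1_X^*$ is itself a module $X \submodto X$. Lax functoriality gives $1_{KX}^* \leq \tilde{K}(1_X^*)$, whence
$$k \leq 1_{KX}^*(X,X) \leq \tilde{K}(1_X^*)(X,X) \leq GK(X,X),$$
the last step being passage to the supremum in the definition of $GK$.

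For transitivity I would fix modules $\varphi\colon X \submodto Y$ and $\psi\colon Y \submodto Z$ and, using $Y \in KY$, isolate the term $B = Y$ in the $\V$-relational composite $\tilde{K}\psi \cdot \tilde{K}\varphi$:
$$\tilde{K}\varphi(X,Y) \otimes \tilde{K}\psi(Y,Z) \leq \bigvee_{B \in KY} \tilde{K}\varphi(X,B) \otimes \tilde{K}\psi(B,Z) = (\tilde{K}\psi \cdot \tilde{K}\varphi)(X,Z).$$
Lax functoriality then gives $\tilde{K}\psi \cdot \tilde{K}\varphi \leq \tilde{K}(\psi \cdot \varphi)$, and since $\psi \cdot \varphi$ is a module $X \submodto Z$ one obtains $(\tilde{K}\psi \cdot \tilde{K}\varphi)(X,Z) \leq \tilde{K}(\psi \cdot \varphi)(X,Z) \leq GK(X,Z)$. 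Forming the supremum over all $\varphi$ and $\psi$ yields $GK(X,Y) \otimes GK(Y,Z) \leq GK(X,Z)$.

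I do not expect a genuine obstacle here, since the argument is purely formal once these two structural inputs are in place; the single point that needs care — and the sole reason the hypothesis ``sublifting of the powerset functor'' is imposed rather than ``arbitrary endofunctor'' — is that both the reflexivity and the transitivity steps must evaluate $\tilde{K}$-relations at the \emph{full} underlying sets, i.e.\ they genuinely use $X \in KX$ and $Y \in KY$. The lax (rather than strict) functoriality of $\tilde{K}$ is exactly strong enough, as every inequality above points in the direction required by the $\V$-category axioms.
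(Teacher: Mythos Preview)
Your proposal is correct and follows the paper's own approach essentially verbatim: the paper's proof of Theorem~\ref{theo_four} \emph{is} the discussion immediately preceding it, which says exactly that the argument of Proposition~\ref{prop_two} goes through with $H$ replaced by $K$ (using only lax functoriality of $\tilde{K}$ and $X\in KX$), while invariance under isomorphism is secured by the identity $\tilde{K}(g^*\cdot\varphi\cdot f_*)=(Kg)^*\cdot\tilde{K}\varphi\cdot(Kf)_*$ established there. One small point of care: your claim ``$(Kf)(X')=X$ because $f,g$ are isomorphisms'' is not guaranteed literally by the definition of a sublifting (which only gives $f(X')\leq (Kf)(X')$ in $HX$); the cleaner way to finish is to note that $((Kg)^*\cdot\tilde{K}\varphi\cdot(Kf)_*)(X',Y')=\tilde{K}\varphi\bigl((Kf)(X'),(Kg)(Y')\bigr)$ and that this agrees with $\tilde{K}\varphi(X,Y)$ because $(Kf)(X')\cong X$ in $KX$ --- but this is a detail the paper itself leaves implicit.
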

    The resulting $\V$-category
    \[
        \G K := (\G, GK)
    \]
    may, with slightly stronger assumptions on $K$, be characterized
    as a colimit. For that purpose we first prove:
    \begin{lemma}\label{lemma_three}
        If $K:\VCat \to \VCat$ is a 2-functor, then
        \[
            \tilde{K}(g^*\cdot \varphi\cdot f_*) = (Kg)^* \cdot
            \tilde{K}\varphi \cdot (Kf)_*
        \]
        for all $f,g, \varphi$ as above.
    \end{lemma}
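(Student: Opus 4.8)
The plan is to continue directly from the reduction already carried out immediately before the statement. For arbitrary $f,g,\varphi$ that computation rewrites the two sides as
\[
    \tilde{K}(g^*\cdot\varphi\cdot f_*) = (Kg)^*\cdot(K\yy_{\varphi})^*\cdot(K\widehat{f_*})^*\cdot(K\yy_{X'})_*
\]
and
\[
    (Kg)^*\cdot\tilde{K}\varphi\cdot(Kf)_* = (Kg)^*\cdot(K\yy_{\varphi})^*\cdot(K\widehat{f^*})_*\cdot(K\yy_{X'})_*,
\]
so the asserted equality reduces to the single identity $(K\widehat{f_*})^* = (K\widehat{f^*})_*$. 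In the isomorphism case this was read off from the fact that $\widehat{f_*}$ and $\widehat{f^*}$ are then mutually inverse $\V$-functors; for a general $\V$-functor $f$ I would replace ``mutually inverse'' by ``adjoint'', and this is precisely the place where the hypothesis that $K$ is a $2$-functor will be used.

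First I would record that $\widehat{f^*}\dashv\widehat{f_*}$ holds in $\VCat$. Indeed, $\widehat{f^*}=\PV f\colon\widehat{X'}\to\hat{X}$ is the left Kan extension along $f$ (its colimit formula is the one displayed in Remarks \ref{remarks_one}(2)), whereas $\widehat{f_*}\colon\hat{X}\to\widehat{X'}$ is simply restriction, $\widehat{f_*}(s)=s\circ f$; left Kan extension is left adjoint to restriction, which gives the adjunction. The same conclusion may be obtained more formally by applying the $2$-functor $\hat{(-)}$ of Proposition \ref{prop:adjunction to star} — which is contravariant on $1$-cells and order-preserving on $2$-cells — to the $\VMod$-adjunction $f_*\dashv f^*$: the unit $1\leq f^*\cdot f_*$ and counit $f_*\cdot f^*\leq 1$ are sent to $1\leq\widehat{f_*}\cdot\widehat{f^*}$ and $\widehat{f^*}\cdot\widehat{f_*}\leq 1$, which exhibit $\widehat{f^*}$ as the left adjoint of $\widehat{f_*}$.

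Since $K$ is assumed to be a $2$-functor, it preserves this adjunction, so $K\widehat{f^*}\dashv K\widehat{f_*}$ in $\VCat$. It then remains only to translate an adjunction of $\V$-functors back into the $(-)_*/(-)^*$ language in which the claim is phrased: for any adjoint pair $u\dashv v$ one has $u_*=v^*$ (the companion of the left adjoint coincides with the conjoint of the right adjoint), which is immediate from the defining adjunction bijection between the corresponding hom-objects. Applying this to $u=K\widehat{f^*}$ and $v=K\widehat{f_*}$ yields $(K\widehat{f^*})_*=(K\widehat{f_*})^*$, exactly the identity to which the claim was reduced. The hard part, and the only genuinely nontrivial point, is this passage: an ordinary functor need not send the adjoint pair $\widehat{f^*}\dashv\widehat{f_*}$ to an adjoint pair, so the preservation of adjunctions guaranteed by $2$-functoriality is indispensable, and the companion–conjoint identity $u_*=v^*$ is the bridge that carries us from adjoint functors back to the module composites occurring in $\tilde{K}$.
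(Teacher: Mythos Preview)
Your proposal is correct and follows essentially the same route as the paper: both reduce to the identity $(K\widehat{f_*})^* = (K\widehat{f^*})_*$, obtain the adjunction $\widehat{f^*}\dashv\widehat{f_*}$ by applying the contravariant order-preserving functor $\hat{(-)}$ to $f_*\dashv f^*$, use the $2$-functoriality of $K$ to preserve it, and then invoke the fact that $h\dashv g$ implies $h_*=g^*$. Your additional description of $\widehat{f^*}$ and $\widehat{f_*}$ as left Kan extension and restriction is a nice concrete gloss, but the formal argument is identical to the paper's.
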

    \begin{proof}
        It suffices to prove $(K\widehat{f_*})^* = (K\widehat{f^*})_*$ for all
        $\V$-functors $f: X' \to X$. But since both $K$ and the
        (contravariant) $\hat{(-)}$ preserve the order of hom-sets,
        from $f_* \dashv f^*$ in $\VMod$ we obtain $K\widehat{f^*} \dashv
        K\widehat{f_*}$ in $\VCat$. Now, since for any pair of $\V$-functors
        one has
        \[
            h \dashv g \iff g^* \dashv h^* \iff g^* = h_*,
        \]
        the desired identity follows with $h = K\widehat{f^*}$ and $g =
        K\widehat{f_*}$.
    \end{proof}
    \begin{prop}\label{prop_three}
        For any sublifting $K$ of the powerset functor preserving
        the order of hom-sets and full fidelity of $\V$-functors one
        has
        \[
            GK(X,Y) = \bigvee_{X \hookrightarrow Z\hookleftarrow Y}
            1_{KZ}^*(X,Y) = \bigvee_{X \hookrightarrow (X \sqcup Y,
            c) \hookleftarrow Y} \tilde{K}c(X,Y)
        \]
        for all $\V$-categories $X$ and $Y$.
    \end{prop}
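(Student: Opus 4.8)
The plan is to route both displayed suprema through a single identity that, for every fully faithful cospan $X \xrightarrow{i} Z \xleftarrow{j} Y$, matches the hom-object of $KZ$ between the images of $X$ and $Y$ with the value of $\tilde{K}$ on the induced module. Recall the standard dictionary: a $\V$-module $\varphi: X \modto Y$ is the same datum as a $\V$-category structure $c_\varphi$ on $X \sqcup Y$ whose diagonal blocks are $a$ and $b$, whose $X\times Y$-block is $\varphi$, and whose $Y\times X$-block is $\bot$; conversely any fully faithful cospan $X \xrightarrow{i} Z \xleftarrow{j} Y$ yields the module $\varphi := j^* \cdot i_*: X \modto Y$ with $\varphi(x,y) = c(i(x),j(y))$, where $c = 1_Z^*$. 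I identify $X$ and $Y$ with their images $i(X), j(Y)\subseteq Z$; since $X\in KX$, $Y\in KY$ and $K$ lifts $P$ on underlying sets, $i(X) = (Ki)(X)\in KZ$ and $j(Y) = (Kj)(Y)\in KZ$, so that $1_{KZ}^*(X,Y)$ and $\tilde{K}c(X,Y)$ are meaningful.

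The key step is the identity
\[
    \tilde{K}\varphi(X,Y) = 1_{KZ}^*\big(i(X), j(Y)\big), \qquad \varphi = j^* \cdot i_*,
\]
valid for every fully faithful cospan. To prove it I would write $\varphi = j^* \cdot 1_Z^* \cdot i_*$ and apply Lemma \ref{lemma_three} — available because the Proposition's hypothesis that $K$ preserves the order of hom-sets makes $K$ a $2$-functor — to get $\tilde{K}\varphi = (Kj)^* \cdot \tilde{K}(1_Z^*) \cdot (Ki)_*$. Since $K$ preserves full fidelity, the Extension Theorem \ref{theo_one} applied to the identity $\V$-functor $1_Z$ collapses $\tilde{K}(1_Z^*) = \tilde{K}\big((1_Z)^*\big)$ to $(K1_Z)^* = 1_{KZ}^*$, whence $\tilde{K}\varphi = (Kj)^* \cdot (Ki)_*$. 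Evaluating this composite at $(X,Y)$, using that an identity module is neutral for composition together with $(Ki)(X)=i(X)$ and $(Kj)(Y)=j(Y)$, produces $\tilde{K}\varphi(X,Y) = 1_{KZ}^*(i(X),j(Y))$. In particular, for $Z = (X\sqcup Y, c)$ the same collapse $\tilde{K}(1_Z^*) = 1_{KZ}^*$ gives $\tilde{K}c\big(i(X),j(Y)\big) = 1_{KZ}^*\big(i(X),j(Y)\big)$.

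With this identity the three suprema collapse into one. Denote the middle and right-hand suprema by $M$ and $R$. First, $GK(X,Y)\le R$: for each module $\varphi$ the disjoint-union cospan into $Z=(X\sqcup Y,c_\varphi)$ has $\varphi = j^*\cdot i_*$, so $\tilde{K}\varphi(X,Y) = \tilde{K}c_\varphi(i(X),j(Y))$, a summand of $R$. Second, $R\le M$: disjoint-union cospans are cospans, and by the last identity each summand $\tilde{K}c(i(X),j(Y))$ of $R$ equals the summand $1_{KZ}^*(i(X),j(Y))$ of $M$. Third, $M\le GK(X,Y)$: every cospan contributes $1_{KZ}^*(i(X),j(Y)) = \tilde{K}\varphi(X,Y)\le GK(X,Y)$ with $\varphi = j^*\cdot i_*$. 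The cycle $GK(X,Y)\le R\le M\le GK(X,Y)$ forces equality throughout.

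I expect the main obstacle to be the key identity, and precisely the interplay of its three ingredients: recognizing the module of a cospan as $j^*\cdot i_*$, invoking Lemma \ref{lemma_three} to move $K$ across the composite (which is exactly where $2$-functoriality is consumed), and using full-fidelity preservation to contract $\tilde{K}(1_Z^*)$ to the honest identity module $1_{KZ}^*$ on $KZ$ — the step distinguishing a generic lax extension from a genuine hom, and the one that makes the middle and right-hand formulas agree. The bookkeeping $(Ki)(X)=i(X)$, the verification that $c_\varphi$ is a $\V$-category, and the size considerations attached to the ``large'' suprema are routine by comparison.
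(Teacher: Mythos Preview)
Your argument is correct and follows essentially the same route as the paper's: both obtain the key identity $\tilde{K}(j^{*}\cdot i_{*}) = (Kj)^{*}\cdot 1_{KZ}^{*}\cdot (Ki)_{*}$ by combining Lemma~\ref{lemma_three} with the full-fidelity clause of Theorem~\ref{theo_one}, and then close the cycle of inequalities via the disjoint-union construction $c_\varphi$. One small caveat: for a general sublifting (such as $H_{\Downarrow}$) one has only $i(X)\le (Ki)(X)$ in $HZ$, not equality, so the ``$X$'' appearing in $1_{KZ}^{*}(X,Y)$ must be read as $(Ki)(X)$---which is precisely what your computation $\tilde{K}\varphi = (Kj)^{*}\cdot(Ki)_{*}$ actually delivers, so the argument is unaffected.
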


    Here the first join ranges over all $\V$-categories $Z$ into
    which $X$ and $Y$ may be fully embedded, and the second one
    ranges over all $\V$-category structures $c$ on the disjoint
    union $X \sqcup Y$ such that $X$ and $Y$ become full
    $\V$-subcategories.

    \begin{proof}
    Denoting the two joins by $v$, $w$, respectively, we trivially
    have $w \leq v$, so that $v \leq GK(X,Y) \leq w$ remains to be
    shown. Considering any full embeddings
    \[
        \xymatrix{X \ar@{^(->}[r]^{j_X} & Z & Y \ar@{_(->}[l]_{j_Y}}
    \]
    and putting $\varphi:= j_Y^* \cdot (j_X)_* = j_Y^* \cdot
    1_Z^* \cdot (j_X)_*$, because of $K$'s 2-functoriality and
    preservation of full fidelity we obtain from Lemma \ref{lemma_three} and Theorem
    \ref{theo_one}
    \[
        \tilde{K} \varphi = (Kj_Y)^* \cdot \tilde{K}1_Z^* \cdot
        (Kj_X)_* = j_{KY}^* \cdot 1_{KZ}^* \cdot (j_{KX})_*
    \]
    and therefore
    \[
        1_{KZ}^* (X,Y) = \tilde{K}\varphi(X,Y) \leq GK(X,Y).
    \]
    Considering any $\varphi: X \modto Y$, one may define a
    $\V$-category structure $c$ on $X \sqcup Y$ by
    \[
    c(z,w) :=
    \left\{%
    \begin{array}{ll}
        1_X^*(z,w) & \hbox{if $z,w \in X$;} \\
        \varphi(z,w) & \hbox{if $z \in X, w \in Y$;} \\
        \bot & \hbox{if $z \in Y, w \in X$;} \\
        1_Y^*(z,w) & \hbox{if $z,w \in Y$.} \\
    \end{array}%
    \right.
    \]
    Then, with $Z:= (X \sqcup Y, c)$, we again have $\varphi = j_Y^* \cdot (j_X)_*$ and obtain
    \[
        \tilde{K}\varphi(X,Y) = \tilde{K}c(X,Y) \leq w.
    \]
    \end{proof}

    \begin{theorem}\label{theo_five}
        For $K$ as in Proposition \ref{prop_three}, $\G K$ is a colimit of the
        diagram
        \[
            \xymatrix{ \VCat_{\emb} \ar[r] & \VCat \ar[r]^K &\VCat
            \ar@{^(->}[r] & \VV{\CAT}}.
        \]
    \end{theorem}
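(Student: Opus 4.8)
The plan is to exploit that the forgetful $2$-functor $\VV{\CAT} \to \mathsf{SET}$ is topological, so that the colimit in question is obtained by forming the colimit of the underlying (large) sets and equipping it with the final $\V$-category structure; I then identify these two ingredients with $\G$ and $GK$ respectively. Concretely, for each $\V$-category $X$ I take the cocone leg $\lambda_X : KX \to \G K$ sending an element $A \in KX \subseteq HX$ to the isomorphism class $[A]$ of the full $\V$-subcategory of $X$ carried by $A$. That each $\lambda_X$ is a $\V$-functor is immediate from Proposition \ref{prop_three}: with $Z := X$ and the full subcategory inclusions as embeddings, $h_X(A,B)$ occurs as one of the terms defining $GK([A],[B])$, whence $h_X(A,B) \le GK([A],[B])$.

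Next I verify the universal property. Given any competing cocone $\mu_X : KX \to C=(C,c)$ over the diagram, the value $u[W] := \mu_W(W)$ --- the evaluation of $\mu_W$ at the top element $W \in KW$ --- is forced by $u \circ \lambda_W = \mu_W$, which simultaneously yields uniqueness. That $u$ is a $\V$-functor is again read off Proposition \ref{prop_three}: writing $GK(X,Y) = \bigvee_{X \hookrightarrow Z \hookleftarrow Y} 1_{KZ}^*(X,Y)$ and using that each $\mu_Z$ is a $\V$-functor together with the cocone identities $\mu_Z \circ Kj_X = \mu_X$, every term $1_{KZ}^*(X,Y) = h_Z(\dots)$ is bounded by $c\big(\mu_X(X), \mu_Y(Y)\big) = c\big(u[X], u[Y]\big)$; taking the join gives $GK(X,Y) \le c(u[X],u[Y])$. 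Here Lemma \ref{lemma_three} and the preservation of full fidelity are exactly what make Proposition \ref{prop_three} available, so the whole argument rests on the hypotheses imposed on $K$.

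The step I expect to be the main obstacle is the identification of the underlying set of the colimit with $\G$, i.e. checking that the equivalence relation generated on $\coprod_X |KX|$ by the legs $Kj$ collapses precisely to isomorphism classes of $\V$-categories. Surjectivity is clear, since $X \in KX$ realises $[X]$; the delicate point is that $Kj(A)$ need not be the direct image $j(A)$, so one must argue --- using that $Kj$ is a full embedding --- that $A$, the top element of $K$ of its supporting subcategory, and $Kj(A)$ all determine the same class, and that isomorphic subcategories are linked through the isomorphism viewed as an embedding. This is where the $2$-dimensional nature of the construction surfaces: the cocone commutes only up to the invertible comparison $2$-cells supplied by full fidelity, so the colimit must be read in the (bi)categorical sense, and the bookkeeping of these comparisons is the part requiring care. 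Once the underlying set is pinned down as $\G$, the final-structure computation reduces, via Proposition \ref{prop_three}, to the observation that the generating $\V$-relation $\bigvee\{h_Z(A,B)\}$ is already the $\V$-category structure $GK$ of Theorem \ref{theo_four}, so no further closure is needed.
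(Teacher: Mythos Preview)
Your route via the topological forgetful functor is not the paper's. The paper argues directly: it exhibits the cocone $\kappa_X:KX\to\G K$, $A\mapsto[A]$, checks that each $\kappa_X$ is a $\V$-functor (your argument for this is the same), and then, given a competing cocone $(\alpha_X:KX\to\J)$, defines $F:\G K\to\J$ by $F[X]=\alpha_X(X)$ and verifies its $\V$-functoriality from Proposition~\ref{prop_three} exactly as you do, via $1^*_{KZ}(X,Y)\le J(\alpha_Z(X),\alpha_Z(Y))\le J(F[X],F[Y])$ for each span $X\hookrightarrow Z\hookleftarrow Y$. Your detour through the quotient $\coprod_X|KX|/{\sim}$ and the final structure forces you to identify ${\sim}$ with isomorphism of $\V$-categories---the step you yourself flag as the main obstacle---whereas the paper never computes this quotient at all: defining $F$ directly on $\G$ and checking $F\kappa_X=\alpha_X$ suffices, and uniqueness is immediate from $F[X]=F\kappa_X(X)=\alpha_X(X)$. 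So what you set up as the hard part is an artefact of your chosen method, not of the problem.

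Your diagnosis that ``the colimit must be read in the (bi)categorical sense'' because the cocone ``commutes only up to the invertible comparison $2$-cells supplied by full fidelity'' is mistaken. Full fidelity of $Kj$ says $(Kj)^*\cdot(Kj)_*=1^*_{KX}$; it does not manufacture any $2$-cells, let alone invertible ones, between $Kj(A)$ and $j(A)$, and the $2$-cells in $\VCat$ are given by pointwise order, so are almost never invertible. The paper intends an ordinary $1$-categorical colimit in $\VV{\CAT}$. You are right that for a general sublifting one need not have $Kj(A)=j(A)$, and this is exactly what the strict cocone identity $\kappa_Y\circ Kj=\kappa_X$ requires; the paper's terse ``$\kappa$ forms a cocone'' passes over this point. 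But for $K=H$ one has $Hj(A)=j(A)\cong A$ on the nose, and the issue evaporates without any $2$-dimensional apparatus. Your instinct that something needs checking here is sound; the bicategorical framework you reach for is not the remedy.
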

    Here $\VCat_{\emb}$ is the category of small $\V$-categories
    with full embeddings as morphisms, and $\VV{\CAT}$ is the
    category of (possibly large) $\V$-categories.

    \begin{proof}
        The colimit injection $\kappa_X: KX \to \G K$ sends $A
        \subseteq X$ to (the isomorphism class of) $A$, considered as
        a $\V$-category in its own right. Since for $A,B \in KX$ one
        has full embeddings $A \hookrightarrow X, B \hookrightarrow X$, trivially
        \[
            1_{KX}^* \leq GK(A,B).
        \]
        Hence $\kappa_X$ is a $\V$-functor, and $\kappa =
        (\kappa_X)_X$ forms a cocone. Any cocone given by $\V
        $-functors $\alpha_X: KX \to (\J, J)$ allows us to define a
        $\V$-functor $F: \G K \to \J$ by $FX = \alpha_X(X)$. Indeed,
        given $\V$-categories $X, Y$ we may consider any $Z$ into
        which $X, Y$ may be fully embedded (for example, their
        coproduct in $\VCat$) and obtain
        \begin{eqnarray*}
            1_{KZ}^*(X,Y) & \leq & J\big( \alpha_Z(X), \alpha_Z(Y)
            \big) \\
            & \leq & J\big( \alpha_X(X), \alpha_Y(Y) \big) \\
            &=& J (FX, FY).
        \end{eqnarray*}
        Hence, $F$ is indeed a $\V$-functor with $F\kappa_X =
        \alpha_X$ for all $X$, and it is obviously the only such
        $\V$-functor.
    \end{proof}
    For the sake of completeness we remark that the assignment
    \[
        K \mapsto \G K
    \]
    is monotone (=functorial): if we order subliftings of the
    powerset functor by
    \[
        K \leq L \iff \text{ there is a nat. tr. $\alpha: K \to L$
        given by inclusion functions},
    \]
    while $\V$-category structures on $\G = \ob(\VCat)/\cong$ carry
    the pointwise order (as $\V$-modules), then
    \[
        \G: \Sub H \to \VV{\CAT}(\G)
    \]
    becomes monotone. Indeed, for every $\V$-module $\varphi: Z
    \modto Y$, naturality of $\alpha$ gives
    \begin{eqnarray*}
        \alpha^*_Y \cdot \tilde{L}\varphi \cdot (\alpha_X)_* & = &
        \alpha^*_Y \cdot (L\yy_{\varphi})^* \cdot
        (L\yy_X)_*\cdot(\alpha_X)_* \\
        & = & (K \yy_{\varphi})^* \cdot \alpha_{\hat{X}}^*
        \cdot(\alpha_{\hat{X}})_* \cdot (K\yy_X)_*\\
        & \geq & (K\yy_{\varphi})^* \cdot (K \yy_X)_* =
        \tilde{K}\varphi.
    \end{eqnarray*}
    Consequently,
    \begin{eqnarray*}
        \tilde{K}\varphi(X,Y) & \leq & ( \alpha_Y^* \cdot
        \tilde{L}\varphi \cdot (\alpha_X)_*) (X,Y) \\
        & = &
        \tilde{L}\varphi(\alpha_X(X), \alpha_Y(Y)) \\
        & = & \tilde{L} \varphi(X,Y),
    \end{eqnarray*}
    which gives $GK(X,Y) \leq GL (X,Y)$, for all $\V$-categories $X,
    Y$.

\section{Operations on the Gromov-Hausdorff $\V$-category}

\begin{prop}
\label{p: GH_monoid}
With the binary operation $(X,Y)\mapsto X\otimes Y$ the $\V$-category $\G H$ becomes a monoid in the monoidal category $\VV\CAT$.
\end{prop}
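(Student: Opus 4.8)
The plan is to verify that $\otimes$ defines a $\V$-functor $\G H \otimes \G H \to \G H$, together with a nullary operation (the unit) picking out the one-element $\V$-category $E$, and that these satisfy the associativity and unit laws needed for a monoid in $\VV\CAT$. Since $\G H$ is a (large) $\V$-category whose objects are isomorphism classes of $\V$-categories, and $\otimes$ is manifestly associative and unital up to isomorphism on $\VCat$ (with unit $E = (\{*\}, k)$), the equational laws will hold \emph{strictly} on isomorphism classes. Thus the entire content of the proposition is the single assertion that the assignment $(X,Y) \mapsto X \otimes Y$ is non-expansive with respect to $GH$, i.e.\ that it is a $\V$-functor out of the tensor product $\G H \otimes \G H$; I expect this to be the main obstacle.

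First I would unwind what must be proved. The $\V$-category structure on $\G H \otimes \G H$ assigns to a pair $\big((X,Y),(X',Y')\big)$ the value $GH(X,X') \otimes GH(Y,Y')$, so the functoriality inequality to establish is
\[
    GH(X,X') \otimes GH(Y,Y') \leq GH(X \otimes Y, X' \otimes Y').
\]
Unfolding the left-hand side via the definition $GH(X,X') = \bigvee_{\varphi: X \submodto X'} \tilde{H}\varphi(X,X')$ and the analogous formula for $GH(Y,Y')$, and using that $\otimes$ preserves suprema in each variable, it suffices to produce, for each pair of $\V$-modules $\varphi: X \modto X'$ and $\psi: Y \modto Y'$, a single $\V$-module $\chi: X \otimes Y \modto X' \otimes Y'$ with
\[
    \tilde{H}\varphi(X,X') \otimes \tilde{H}\psi(Y,Y') \leq \tilde{H}\chi(X \otimes Y, X' \otimes Y').
\]

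The natural candidate is the \emph{tensor product of modules} $\chi := \varphi \otimes \psi$, defined by $(\varphi \otimes \psi)\big((x,y),(x',y')\big) = \varphi(x,x') \otimes \psi(y,y')$; one checks routinely that this is again a $\V$-module (the module conditions for $\chi$ follow from those for $\varphi$, $\psi$ together with the commutativity of $\otimes$ and the definition of $a \otimes b$). With this choice the verification reduces to a direct computation using the explicit formula of Lemma \ref{lemma_two}. Indeed,
\[
    \tilde{H}(\varphi \otimes \psi)(X \otimes Y, X' \otimes Y') = \bigwedge_{(x,y) \in X \times Y} \bigvee_{(x',y') \in X' \times Y'} \varphi(x,x') \otimes \psi(y,y'),
\]
and I would compare this against the product
\[
    \Big(\bigwedge_{x \in X} \bigvee_{x' \in X'} \varphi(x,x')\Big) \otimes \Big(\bigwedge_{y \in Y} \bigvee_{y' \in Y'} \psi(y,y')\Big).
\]

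The crux, and the step I expect to require the most care, is the interaction between $\otimes$ and the meets: passing from the product of two $\bigwedge\bigvee$-expressions to a single $\bigwedge\bigvee$-expression over the product index set. Using only that $\otimes$ preserves suprema (so it commutes freely with the inner $\bigvee$'s) and the general inequality $(\bigwedge_i u_i) \otimes (\bigwedge_j w_j) \leq \bigwedge_{i,j} (u_i \otimes w_j)$ — which holds in any quantale since $\otimes$ is monotone in each variable — one obtains exactly the desired inequality, choosing for each $(x,y)$ the witnesses $x'$, $y'$ independently. I would write this as a short chain of (in)equalities, being careful that the direction of the inequality is the one needed, namely that the product of the two Hausdorff distances is \emph{below} the Hausdorff distance of the tensor. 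Finally, I would note that the unit $E$ satisfies $E \otimes X \cong X \cong X \otimes E$, so the monoid unit law and the associativity law descend to strict identities on $\G$, completing the proof that $(\G H, \otimes, E)$ is a monoid in $\VV\CAT$.
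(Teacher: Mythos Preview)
Your proposal is correct and follows essentially the same approach as the paper: reduce to the single inequality $GH(X,X')\otimes GH(Y,Y')\leq GH(X\otimes Y,X'\otimes Y')$, witness it via the tensor module $\varphi\otimes\psi$, and verify $\tilde{H}\varphi(X,X')\otimes\tilde{H}\psi(Y,Y')\leq\tilde{H}(\varphi\otimes\psi)(X\otimes Y,X'\otimes Y')$ using the explicit formula of Lemma~\ref{lemma_two}, monotonicity of $\otimes$ in each variable, and preservation of suprema. The paper leaves the unit and associativity laws implicit, while you spell them out; otherwise the arguments coincide.
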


\begin{proof}
All we need to show is that
\[\otimes:\G H\otimes\G H\to \G H\]
is a $\V$-functor. But for any $\V$-modules $\varphi:X\modto X'$, $\psi:Y\modto Y'$ and all $x\in X$, $y\in Y$ one trivially has
\[\tilde{H}\varphi(X,X')\otimes\tilde{H}\psi(Y,Y')\leq \bigvee_{x'\in X',y'\in Y'}\varphi(x,x')\otimes\psi(y,y'),\]
hence
\[\tilde{H}\varphi(X,X')\otimes\tilde{H}\psi(Y,Y')\leq\tilde{H}(\varphi\otimes\psi)(X\otimes Y,X'\otimes Y'),\]
with the $\V$-module $\varphi\otimes\psi:X\otimes Y\modto X'\otimes Y'$ given by
\[(\varphi\otimes\psi)((x,y),(x',y'))=\varphi(x,x')\otimes\psi(y,y').\]
Consequently,
\begin{eqnarray*}
GH\otimes GH((X,Y),(X',Y'))&=&GH(X,X')\otimes GH(Y,Y')\\
&=&\bigvee_{\varphi,\psi}\tilde{H}\varphi(X,X')\otimes \tilde{H}\psi(Y,Y')\\
&\leq&\bigvee_{\chi:X\otimes Y\submodto X'\otimes Y'}\tilde{H}\chi(X\otimes Y,X'\otimes Y')\\
&=&GH(X\otimes Y, X'\otimes Y').\end{eqnarray*}
\end{proof}

We note that when the $\otimes$-neutral element $k$ of $\V$ is its top element $\top$, then $v\otimes w\leq v\wedge w$ for all $v,w\in \V$ (since $v\otimes w\leq v\otimes k=v$); conversely, this inequality implies $k=\top$ (since $\top=\top\otimes k\leq \top\wedge k=k$).

\begin{prop}\label{p: mmonoid}
If $k=\top$ in $\V$, then $\G H$ becomes a monoid in the monoidal category $\VV\CAT$ with the binary operation given either by product or by coproduct.
\end{prop}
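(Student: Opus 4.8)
The plan is to follow the blueprint of Proposition~\ref{p: GH_monoid}: for each of the two operations it suffices to exhibit the multiplication as a $\V$-functor and then dispose of the monoid axioms for formal reasons. Writing $\boxdot$ for either the product $\times$ or the coproduct $\sqcup$ of $\V$-categories, I must show that $(X,Y)\mapsto X\boxdot Y$ underlies a $\V$-functor $m\colon\G H\otimes\G H\to\G H$, i.e. that
\[
GH(X,X')\otimes GH(Y,Y')\leq GH(X\boxdot Y,\,X'\boxdot Y')
\]
for all $\V$-categories $X,X',Y,Y'$. The entire argument will rest on the single extra hypothesis available here, namely $k=\top$, equivalently $v\otimes w\leq v\wedge w$ for all $v,w\in\V$ (recorded just before the statement). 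Exactly as in Proposition~\ref{p: GH_monoid}, it is enough, given $\V$-modules $\varphi\colon X\modto X'$ and $\psi\colon Y\modto Y'$, to produce a $\V$-module $\chi\colon X\boxdot Y\modto X'\boxdot Y'$ with $\tilde{H}\varphi(X,X')\otimes\tilde{H}\psi(Y,Y')\leq\tilde{H}\chi(X\boxdot Y,X'\boxdot Y')$; taking the supremum over $\varphi,\psi$ and using the definition of $GH$ then delivers the displayed inequality.

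For the product I would take $\chi\big((x,y),(x',y')\big):=\varphi(x,x')\wedge\psi(y,y')$ on $X\times Y=(X\times Y,a\times b)$. That $\chi$ is a $\V$-module follows from the module laws for $\varphi,\psi$ and monotonicity of $\otimes$ alone: for the left compatibility one bounds $(a\times b)\otimes\chi$ above by each of $a(x_0,x)\otimes\varphi(x,x')$ and $b(y_0,y)\otimes\psi(y,y')$, hence by $\varphi(x_0,x')\wedge\psi(y_0,y')$, the right condition being symmetric. By Lemma~\ref{lemma_two}, $\tilde{H}\chi(X\times Y,X'\times Y')=\bigwedge_{(x,y)}\bigvee_{(x',y')}\varphi(x,x')\wedge\psi(y,y')$, and for each fixed $(x,y)$ one estimates
\[
\begin{aligned}
\tilde{H}\varphi(X,X')\otimes\tilde{H}\psi(Y,Y')
&\leq\Big(\bigvee_{x'}\varphi(x,x')\Big)\otimes\Big(\bigvee_{y'}\psi(y,y')\Big)\\
&=\bigvee_{x',y'}\varphi(x,x')\otimes\psi(y,y')
\leq\bigvee_{x',y'}\varphi(x,x')\wedge\psi(y,y'),
\end{aligned}
\]
using first $\tilde{H}\varphi(X,X')\leq\bigvee_{x'}\varphi(x,x')$ and $\tilde{H}\psi(Y,Y')\leq\bigvee_{y'}\psi(y,y')$, then distributivity of $\otimes$ over joins, and finally $\otimes\leq\wedge$. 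Taking the meet over $(x,y)$ gives the required bound.

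For the coproduct, the structure $c$ on $X\sqcup Y$ is $a$ on $X$, $b$ on $Y$, and $\bot$ across the two summands. I would define $\chi$ to be $\varphi$ on $X\times X'$, $\psi$ on $Y\times Y'$, and $\bot$ on the two cross blocks; it is a $\V$-module immediately, since every mixed composite appearing in the module laws carries a $\bot$ factor. Because the cross joins vanish, Lemma~\ref{lemma_two} yields $\tilde{H}\chi(X\sqcup Y,X'\sqcup Y')=\tilde{H}\varphi(X,X')\wedge\tilde{H}\psi(Y,Y')$, whence $\tilde{H}\varphi(X,X')\otimes\tilde{H}\psi(Y,Y')\leq\tilde{H}\chi(X\sqcup Y,X'\sqcup Y')$ follows at once from $\otimes\leq\wedge$.

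Finally, the monoid axioms are formal. The unit is the isomorphism class of the terminal $\V$-category $(\{*\},\top)$ for the product and of the initial (empty) $\V$-category for the coproduct; since $E=(\{*\},k)$ and $k=\top$, the required morphism $E\to\G H$ is automatic, any map into a single object being a $\V$-functor because $k\leq GH(Z,Z)$. Associativity and the unit laws hold because $\times$ and $\sqcup$ are associative and unital up to isomorphism of $\V$-categories, hence strictly so on $\G=\ob(\VCat)/\cong$, and because a $\V$-functor between $\V$-categories is determined by its action on objects, so the two bracketings coincide. The only point demanding genuine care is the verification that $\chi$ really is a $\V$-module in the product case: one must route the estimate through monotonicity of $\otimes$ together with the module laws of $\varphi,\psi$, and similarly in the main inequality one must distribute $\otimes$ (not $\wedge$) over the joins before invoking $\otimes\leq\wedge$, since a distributivity of $\wedge$ over joins is unavailable in a general quantale.
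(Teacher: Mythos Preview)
Your proof is correct and follows essentially the same approach as the paper: in both cases one constructs the module $\varphi\times\psi$ (resp.\ $\varphi+\psi$) and verifies the key inequality $\tilde{H}\varphi(X,X')\otimes\tilde{H}\psi(Y,Y')\leq\tilde{H}\chi$ using $\otimes\leq\wedge$. Your observation that $\tilde{H}(\varphi+\psi)(X\sqcup Y,X'\sqcup Y')$ is actually \emph{equal} to $\tilde{H}\varphi(X,X')\wedge\tilde{H}\psi(Y,Y')$ is a small sharpening of what the paper records, and your explicit treatment of the module axioms and of the formal monoid laws supplies details the paper leaves to the reader.
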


\begin{proof}
We need to show that
\[\times:\G H\otimes \G H\to\G H\;\mbox{ and }\;+:\G H\otimes \G H\to\G H\]
are both $\V$-functors. Similarly to the proof of Proposition \ref{p: GH_monoid}, for the $\V$-functoriality of $\times$ it suffices to show
\begin{equation}\label{eq: times}
\tilde{H}\varphi(X,X')\otimes\tilde{H}\psi(Y,Y')\leq \tilde{H}(\varphi\times\psi)(X\times Y, X'\times Y')\end{equation}
for all $\V$-modules $\varphi:X\modto X'$, $\psi:Y\modto Y'$, where $\varphi\times\psi:X\times Y\to X'\times Y'$ is defined by
\[(\varphi\times\psi)((x,y),(x',y'))=\varphi(x,x')\wedge \psi(y,y').\]
(Note that, in this notation, $1_X^*\times 1_Y^*$ is the $\V$-category structure of the product $X\times Y$ in $\VV\Cat$. The verification that $\varphi\times\psi$ is indeed a $\V$-module is easy.)
But (\ref{eq: times}) follows just like in Proposition \ref{p: GH_monoid} since $k=\top$.

For the $\V$-functoriality of $+$ it suffices to establish the inequality
\begin{equation}\label{eq: plus}
\tilde{H}\varphi(X,X')\otimes \tilde{H}\psi(Y,Y')\leq\tilde{H}(\varphi+\psi)(X+Y,X'+Y'),\end{equation}
with $\varphi+\psi:X+Y\modto X'+Y'$ defined by
\[(\varphi+\psi)(z,z')=\left\{\begin{array}{ll}
\varphi(z,z')&\mbox{ if }z\in X,\,z'\in X',\\
\psi(z,z')&\mbox{ if }z\in Y, z'\in Y',\\
\bot&\mbox{ else.}
\end{array}\right.\]
(Again, $1_X^*+1_Y^*$ is precisely the $\V$-category structure of the coproduct $X+Y$ in $\VV\Cat$, and the verification of the $\V$-module property of $\varphi+\psi$ is easy.) To verify (\ref{eq: plus}) we consider $z\in X+Y$; then, for $z\in X$, say, we have
\begin{eqnarray*}
\tilde{H}\varphi(X,X')\otimes\tilde{H}\psi(Y,Y')&\leq&\tilde{H}\varphi(X,X')\wedge\tilde{H}\psi(Y,Y')\\
&\leq&\tilde{H}\varphi(X,X')\\
&\leq&\bigvee_{x'\in X'}\varphi(z,x')\\
&\leq&\bigvee_{z'\in X'+Y'}(\varphi+\psi)(z,z'),\end{eqnarray*}
and (\ref{eq: plus}) follows.
\end{proof}

The previous proof shows that, without the assumption $k=\top$, one has that $+:\G H\times\G H\to\G H$ is a $\V$-functor, e.g. that $(\G H,+)$ is a monoid in the Cartesian category $\VV\CAT$, but here we will continue to consider the monoidal structure of $\VV\CAT$.

\begin{theorem}\label{t: Haus functor}
If $k=\top$ in $\V$, then the Hausdorff functor $H:\VV\Cat\to\VV\Cat$ induces a homomorphism $H:(\G H, +)\to(\G H, \times)$ of monoids in the monoidal category $\VV\CAT$.
\end{theorem}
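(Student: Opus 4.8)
The plan is to read the statement as asserting three things about the self-map $H\colon\G\to\G$, $[X]\mapsto[HX]$ (which is well defined on isomorphism classes by functoriality of $H$): that it is a $\V$-functor $\G H\to\G H$, that it carries the multiplication $+$ to the multiplication $\times$, and that it carries units to units, so that it is a homomorphism of monoids in $(\VV\CAT,\otimes)$. The hypothesis $k=\top$ enters precisely through Proposition \ref{p: mmonoid}, which is what makes $+$ and $\times$ available as $\V$-functors $\G H\otimes\G H\to\G H$ in the \emph{monoidal} (not merely Cartesian) category, so that the two monoids exist at all; the remaining work is essentially the classical identity $P(X\sqcup Y)\cong PX\times PY$ transported to the Hausdorff level.

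First I would check that $H$ is an endo-$\V$-functor of $\G H$, i.e. $GH(X,Y)\le GH(HX,HY)$. Given any $\V$-module $\varphi\colon X\modto Y$, the lax functor $\tilde{H}$ of Section 5 yields a $\V$-module $\tilde{H}\varphi\colon HX\modto HY$, since $\tilde{H}$ agrees with $H$ on objects. Applying Lemma \ref{lemma_two} twice,
\[
\tilde{H}\tilde{H}\varphi(HX,HY)=\bigwedge_{A\subseteq X}\bigvee_{B\subseteq Y}\tilde{H}\varphi(A,B)\ge\bigwedge_{A\subseteq X}\tilde{H}\varphi(A,Y)\ge\tilde{H}\varphi(X,Y),
\]
where the last step holds because $\tilde{H}\varphi(A,Y)=\bigwedge_{x\in A}\bigvee_{y\in Y}\varphi(x,y)\ge\bigwedge_{x\in X}\bigvee_{y\in Y}\varphi(x,y)=\tilde{H}\varphi(X,Y)$ for every $A\subseteq X$. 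Since $\tilde{H}\varphi$ is one of the modules in the supremum defining $GH(HX,HY)$, this gives $\tilde{H}\varphi(X,Y)\le GH(HX,HY)$, and joining over all $\varphi$ yields $GH(X,Y)\le GH(HX,HY)$. (This step does not use $k=\top$.)

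The core of the argument is multiplicativity, namely that $H\circ(+)$ and $(\times)\circ(H\otimes H)$ coincide as $\V$-functors $\G H\otimes\G H\to\G H$. A $\V$-functor with codomain in $\VV\CAT$ is determined by its assignment on objects, and the objects of $\G$ \emph{are} isomorphism classes, so it suffices to prove the object-level identity $H(X+Y)\cong HX\times HY$. I would verify this through the bijection $P(X\sqcup Y)\to PX\times PY$, $C\mapsto(C\cap X,\,C\cap Y)$. Writing $C=A\sqcup B$, $C'=A'\sqcup B'$ and $c=1_X^*+1_Y^*$, the cross terms of the coproduct structure are $\bot$, so $\bigvee_{w\in C'}c(z,w)=a(z,A')$ for $z\in A$ and $\bigvee_{w\in C'}c(z,w)=b(z,B')$ for $z\in B$; hence
\[
h_{X+Y}(C,C')=\Big(\bigwedge_{z\in A}a(z,A')\Big)\wedge\Big(\bigwedge_{z\in B}b(z,B')\Big)=h_X(A,A')\wedge h_Y(B,B'),
\]
which is exactly the product structure $(h_X\times h_Y)\big((A,B),(A',B')\big)$. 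Thus the bijection is an isomorphism of $\V$-categories, giving $H(X+Y)\cong HX\times HY$.

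Finally, for units: the neutral object of $(\G H,+)$ is the initial (empty) $\V$-category $\emptyset$, and $H\emptyset=(\{\emptyset\},\top)\cong E$ once $k=\top$, which is both the $\otimes$-unit and the neutral object of $(\G H,\times)$; hence $H$ sends the unit of $(\G H,+)$ to that of $(\G H,\times)$. Combining the three steps exhibits $H$ as the required monoid homomorphism. I expect the main obstacle to be the middle step: one must check that the purely set-theoretic decomposition $P(X\sqcup Y)\cong PX\times PY$ is compatible with the Hausdorff $\V$-structures, i.e. that the $\bot$-valued cross terms of the coproduct and the $\wedge$ of the product both produce the common value $h_X(A,A')\wedge h_Y(B,B')$. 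Once this compatibility is established, recognizing that the monoid-homomorphism square collapses to this object-level isomorphism makes everything else formal.
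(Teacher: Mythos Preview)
Your proposal is correct and follows essentially the same route as the paper: you establish $\V$-functoriality of $H$ on $\G H$ via the inequality $\tilde{H}\varphi(X,Y)\le\tilde{H}\tilde{H}\varphi(HX,HY)$, verify the isomorphism $H(X+Y)\cong HX\times HY$ by the same computation $h_{X+Y}(A\sqcup B,A'\sqcup B')=h_X(A,A')\wedge h_Y(B,B')$, and handle units via $H\varnothing$ being a one-point $\V$-category with structure $\top$. The only cosmetic difference is that the paper identifies $H\varnothing$ directly with the terminal object $1$ (the $\times$-neutral) rather than with the $\otimes$-unit $E$; your detour through $E$ is harmless since $k=\top$ forces $E\cong 1$, but the identification $H\varnothing\cong 1$ itself needs no hypothesis on $k$.
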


\begin{proof}
Let us first show that the object-part of the functor $H:\VV\Cat\to\VV\Cat$ defines indeed a $\V$-functor $H:\G H\to\G H$, so that $GH(X,Y)\leq GH(HX,HY)$ for all $\V$-categories $X,Y$. But for every $\V$-module $\varphi:X\modto Y$ and all $A\subseteq X$ one has
\[\tilde{H}\varphi(X,Y)\leq\tilde{H}\varphi(A,Y)\leq\bigvee_{B\subseteq Y}\tilde{H}\varphi(A,B),\]
which implies
\[\tilde{H}\varphi(X,Y)\leq\tilde{H}(\tilde{H}\varphi)(HX,HY)\]
and then the desired inequality.

In order to identify $H$ as a homomorphism, we first note that, as an empty meet, $h_\varnothing(\varnothing,\varnothing)$ is the top element in $\V$, so that $H\varnothing\cong 1$ is terminal in $\VV\Cat$, e.g. neutral in $(\G H, \times)$. The bijective map
\[+:HX\times HY\to H(X+Y)\]
needs to be identified as an isomorphism in $\VV\Cat$, e.g. we must show
\[(h_X\times h_Y)((A,B),(A',B'))=h_{X+Y}(A+B,A'+B')\]
for all $A,A'\subseteq X$, $B, B'\subseteq Y$. With $a=1_X^*$ and $b=1_Y^*$, in the notation of the proof of Proposition \ref{p: mmonoid} one has
\[\bigvee_{z'\in A'+B'}(a+b)(x,z')=\bigvee_{x'\in A'}a(x,x')\]
for all $x\in A$ (since $(a+b)(x,z')=\bot$ when $z'\in B'$). Consequently,
\begin{eqnarray*}
h_{X+Y}(A+B,A'+B')&=&(\bigwedge_{x\in A}\bigvee_{z'\in A'+B'}(a+b)(x,z'))\wedge(\bigwedge_{y\in B}\bigvee_{z'\in A'+B'}(a+b)(y,z'))\\
&=&(\bigwedge_{x\in A}\bigvee_{x'\in A'}a(x,x'))\wedge(\bigwedge_{y\in B}\bigvee_{y'\in B'}b(y,y'))\\
&=&h_X(A,A')\wedge h_Y(B,B'),\end{eqnarray*}
as desired.
\end{proof}

\begin{remarks}\label{remarks_four}
\begin{enumerate}
\item The ($\VV\Cat$)-isomorphism
\[HX\times HY\cong H(X+Y)\]
exhibited in the proof of Theorem \ref{t: Haus functor} easily extends to the infinite case:
\[\prod_{i}HX_i\cong H(\sum_i X_i).\]
\item Since there is no general concept of a (covariant!) functor transforming coproducts into products, a more enlightening explanation for the formula just encountered seems to be in order, as follows. Since $\VV\Cat$ is an {\em extensive category} (see \cite{CLW}), for every (small) family $(X_i)_{i\in I}$ of $\V$-categories the functor
    \[\Sigma:\prod_i\VV\Cat/X_i\to\VV\Cat/\sum_i X_i\]
    is an equivalence of categories. Now, the (isomorphism classes of a) comma category $\VV\Cat/X$ can be made into a (large) $\V$-category when we define the $\V$-category structure $c$ by
    \[c(f,g)=\bigwedge_{x\in A}\bigvee_{y\in B} 1_X^*(f(x),g(y))=h_X(f(A),g(B)),\]
    for all $f:A\to X$, $g:B\to X$ in $\VV\Cat$. In this way the equivalence $\Sigma$ has become an isomorphism of $\V$-categories, and since $HX$ is just a $\V$-subcategory of $\VV\Cat/X$, the ($\VV\Cat$)-isomorphism of (1) is simply a restriction of the isomorphism $\Sigma$:
    \[\xymatrix{\prod_i\VV\Cat/X_i\ar[r]^\sum&\VV\Cat/\sum_iX_i\\
    \prod_iHX_i\ar@{^(->}[u]\ar[r]^\sim&H(\sum_iX_i)\ar@{^(->}[u]}\]
\end{enumerate}
\end{remarks}

\section{Symmetrization}
A $\V$-category $X$, or just its structure $a=1_X^*$, is {\em symmetric} when $a=a^\circ$. This defines the full subcategory $\VV\Cat_s$ of $\VV\Cat$ which is coreflective: the coreflector sends an arbitrary $X$ to $X^s=(X,a^s)$ with $a^s=a\times a^\circ$, that is: $a^s(x,y)=a(x,y)\wedge a(y,x)$ for all $x,y\in X$. By
\[H^s X=(HX)^s=(PX,h_X^s)\]
one can define a sublifting $H^s:\VV\Cat\to\VV\Cat$ of the powerset functor which (like $H$) preserves full fidelity, but which (unlike $H$) fails to be a 2-functor. However its restriction
\[H^s:\VV\Cat_s\to\VV\Cat_s\]
is a 2-functor.

\begin{remarks}\label{remarks_five}
\begin{enumerate}
\item $H^sX$ must not be confused with $H(X^s)$. For example, for $\V=\two$ and a set $X$ provided with a separated (=antisymmetric) order, $X^s$ carries the discrete order. Hence, while in $H^sX$ one has ($A\leq B\iff A\subseteq \downarrow B$ and $B\subseteq\downarrow A\iff \downarrow A=\downarrow B$), in $H(X^s)$ one has ($A\leq B\iff A\subseteq B$).
    \item Even after its restriction to $\VV\Cat_s$ there is no easy way of evaluating $\widetilde{H^s}\varphi(A,B)$ for a $\V$-module $\varphi:X\modto Y$ and $A\subseteq X$, $B\subseteq Y$, since the computation leading to the easy formula of Lemma \ref{lemma_two} does not carry through when $H$ is replaced by $H^s$.
        \item The following addendum to Proposition \ref{prop_three} suggests how to overcome the difficulty mentioned in (2) when trying to define a non-trivial symmetric Gromov structure: $\V$-category structures $c$ on the disjoint union $X\sqcup Y$ such that the $\V$-categories $X, Y$ become full $\V$-subcategories correspond bijectively to pairs of $\V$-modules $\varphi:X\modto Y$, $\varphi':Y\modto X$ with
            \[\varphi'\cdot \varphi\leq 1_X^*,\;\varphi\cdot\varphi'\leq 1_Y^*;\]
            we write
            \[\xymatrix{\varphi:X\ar@<0.5ex>[r]|{\circ}&Y:\varphi'\ar@<0.5ex>[l]|{\circ}}\]
            for such a pair. Under the hypotheses of Proposition \ref{prop_three} we can now write
            \[GK(X,Y)=\bigvee_{\xymatrix@-10pt{\varphi:X\ar@<0.5ex>[r]|{\circ}&Y:\varphi'\ar@<0.5ex>[l]|{\circ}}} \tilde{K}\varphi(X,Y).\]
            \end{enumerate}
            \end{remarks}

Hence, for any sublifting $K$ of $P$ we put
\[G^sK(X,Y):=\bigvee_{\xymatrix@-10pt{\varphi:X\ar@<0.5ex>[r]|{\circ}&Y:\varphi'\ar@<0.5ex>[l]|{\circ}}} \tilde{K}\varphi(X,Y)\wedge \tilde{K}\varphi'(Y,X)\]
and obtain easily:

\begin{corollary}\label{coro_three}
For any sublifting $K$ of the powerset functor,
\[\G^sK=(\G,G^sK)\]
is a large symmetric $\V$-category, and when $K$ is a 2-functor preserving full fidelity of $\V$-functors, then
\[G^sK(X,Y)=\bigvee_{X\hookrightarrow Z\hookleftarrow Y} 1_{KZ}^*(X,Y)\wedge 1_{KZ}^*(Y,X)=\bigvee_{X\hookrightarrow (X\sqcup Y,c)\hookleftarrow Y} \tilde{K}c(X,Y)\wedge \tilde{K}c(Y,X)\]
for all $\V$-categories $X, Y$.
\end{corollary}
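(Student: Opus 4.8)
The plan is to mirror the arguments of Propositions \ref{prop_two} and \ref{prop_three}, carrying the extra meet-factor through every estimate and exploiting the manifest symmetry of the defining join. For the first assertion (valid for an arbitrary sublifting $K$), symmetry $G^sK(X,Y)=G^sK(Y,X)$ is immediate: interchanging the two spaces sends an admissible pair $(\varphi,\varphi')$ to $(\varphi',\varphi)$ and leaves the expression $\tilde{K}\varphi(X,Y)\wedge\tilde{K}\varphi'(Y,X)$ unchanged. For reflexivity I take the admissible pair $\varphi=\varphi'=1_X^*$ (admissible since $1_X^*\cdot 1_X^*=1_X^*\leq 1_X^*$) and invoke lax functoriality of $\tilde{K}$, which gives $\tilde{K}1_X^*(X,X)\geq 1_{KX}^*(X,X)\geq k$, hence $k\leq G^sK(X,X)$.

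The crux of part one is transitivity. Given admissible pairs $(\varphi,\varphi')$ for $X,Y$ and $(\psi,\psi')$ for $Y,Z$, I set $\chi:=\psi\cdot\varphi$ and $\chi':=\varphi'\cdot\psi'$ and first check that $(\chi,\chi')$ is admissible: using $\psi'\cdot\psi\leq 1_Y^*$ one gets $\chi'\cdot\chi\leq\varphi'\cdot 1_Y^*\cdot\varphi=\varphi'\cdot\varphi\leq 1_X^*$, and symmetrically $\chi\cdot\chi'\leq 1_Z^*$. Then I combine the meet-factors by the elementary inequality $(u\wedge u')\otimes(w\wedge w')\leq(u\otimes w)\wedge(u'\otimes w')$, which holds by monotonicity of $\otimes$. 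Applying the estimate already used in Proposition \ref{prop_two} — namely that $Y\in KY$ yields $\tilde{K}\varphi(X,Y)\otimes\tilde{K}\psi(Y,Z)\leq(\tilde{K}\psi\cdot\tilde{K}\varphi)(X,Z)\leq\tilde{K}\chi(X,Z)$, and symmetrically $\tilde{K}\psi'(Z,Y)\otimes\tilde{K}\varphi'(Y,X)\leq\tilde{K}\chi'(Z,X)$ — I obtain $(\tilde{K}\varphi(X,Y)\wedge\tilde{K}\varphi'(Y,X))\otimes(\tilde{K}\psi(Y,Z)\wedge\tilde{K}\psi'(Z,Y))\leq\tilde{K}\chi(X,Z)\wedge\tilde{K}\chi'(Z,X)\leq G^sK(X,Z)$, and a supremum over all four modules establishes $G^sK(X,Y)\otimes G^sK(Y,Z)\leq G^sK(X,Z)$.

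For the formula under the stronger hypotheses, I follow Proposition \ref{prop_three} almost verbatim, replacing single modules by the pairs of Remarks \ref{remarks_five}(3). Writing $v$ and $w$ for the two joins on the right, the inequality $w\leq v$ is trivial, since each $(X\sqcup Y,c)$ is a particular $Z$ and $\tilde{K}c=\tilde{K}1_Z^*=1_{KZ}^*$ by preservation of full fidelity. For $v\leq G^sK(X,Y)$, given full embeddings $X\hookrightarrow Z\hookleftarrow Y$ via $j_X,j_Y$, I set $\varphi=j_Y^*\cdot(j_X)_*$ and $\varphi'=j_X^*\cdot(j_Y)_*$; the adjunctions $(j_X)_*\dashv j_X^*$, $(j_Y)_*\dashv j_Y^*$ together with full faithfulness make $(\varphi,\varphi')$ admissible, and Lemma \ref{lemma_three} with Theorem \ref{theo_one} give $\tilde{K}\varphi(X,Y)=1_{KZ}^*(X,Y)$ and $\tilde{K}\varphi'(Y,X)=1_{KZ}^*(Y,X)$, exactly as there. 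For $G^sK(X,Y)\leq w$, to an arbitrary admissible pair $(\varphi,\varphi')$ I associate, via the correspondence of Remarks \ref{remarks_five}(3), the structure $c$ on $X\sqcup Y$ whose cross-components are $\varphi$ and $\varphi'$, so that $\tilde{K}\varphi(X,Y)\wedge\tilde{K}\varphi'(Y,X)=\tilde{K}c(X,Y)\wedge\tilde{K}c(Y,X)\leq w$. Chaining $w\leq v\leq G^sK(X,Y)\leq w$ forces equality throughout.

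The main subtlety, relative to the non-symmetric case, is purely bookkeeping of both directions at once. The single genuinely new computation is the meet-tensor estimate combined with the check that the composite pair $(\psi\cdot\varphi,\varphi'\cdot\psi')$ remains admissible; both are short. For the formula, the essential point — and where the $2$-functoriality and full-fidelity hypotheses are actually consumed — is that Remarks \ref{remarks_five}(3) lets one structure $c$ realize $\varphi$ and $\varphi'$ simultaneously, so that $\tilde{K}\varphi(X,Y)$ and $\tilde{K}\varphi'(Y,X)$ are both read off the same $1_{KZ}^*$; this is precisely what makes the symmetric formula available.
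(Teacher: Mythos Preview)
Your proposal is correct and follows essentially the same approach as the paper's own proof, which simply refers back to Propositions \ref{prop_two} and \ref{prop_three} after noting that admissible pairs compose. You have spelled out the details the paper leaves implicit --- the meet-tensor estimate for transitivity, the admissibility check for $(\psi\cdot\varphi,\varphi'\cdot\psi')$, and the use of Remarks \ref{remarks_five}(3) to realize both cross-modules by a single structure $c$ --- all of which are exactly the ``slight adaptations'' the paper alludes to.
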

\begin{proof}
Revisiting the proof of Proposition \ref{prop_two}, we just note that
\[\xymatrix{\varphi:X\ar@<0.5ex>[r]|{\circ}&Y:\varphi'\ar@<0.5ex>[l]|{\circ}}, \xymatrix{\psi:Y\ar@<0.5ex>[r]|{\circ}&Z:\psi'\ar@<0.5ex>[l]|{\circ}} \mbox{ implies }
\xymatrix{\psi\cdot\varphi:X\ar@<0.5ex>[r]|{\circ}&Z:\varphi'\cdot\psi'\ar@<0.5ex>[l]|{\circ}}.\]
A slight adaption of the computation given in Proposition
\ref{prop_two} now shows that $G^sK$ is indeed a $\V$-category
structure on $\G=\ob\VV\Cat/\cong$. The given formulae follow as in
the proof of Proposition \ref{prop_three}.
\end{proof}

\begin{corollary}\label{coro_four}
$G^sH(X,Y)=G(H^s)(X,Y)$, for all $\V$-categories $X, Y$.
\end{corollary}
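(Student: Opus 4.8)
The plan is to show that both sides equal the join $\bigvee_{X\hookrightarrow Z\hookleftarrow Y}1_{H^sZ}^*(X,Y)$ over all full embeddings of $X$ and $Y$ into a common $\V$-category $Z$. For $G^sH$ this is essentially Corollary \ref{coro_three}: since $H$ is a $2$-functor preserving full fidelity, that corollary gives $G^sH(X,Y)=\bigvee_Z\,1_{HZ}^*(X,Y)\wedge 1_{HZ}^*(Y,X)$, and as $1_{HZ}^*=h_Z$ while $1_{H^sZ}^*=h_Z\wedge h_Z^\circ$, the integrand is precisely $1_{H^sZ}^*(X,Y)$. Everything therefore reduces to proving the analogue of Proposition \ref{prop_three} for the sublifting $K=H^s$, namely $G(H^s)(X,Y)=\bigvee_Z 1_{H^sZ}^*(X,Y)$; the subtlety is that $H^s$ is not a $2$-functor, so Proposition \ref{prop_three} cannot simply be quoted.

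The first concrete step is to evaluate $\widetilde{H^s}$. Writing $H^s=(\,\cdot\,)^s\circ H$ and unwinding Theorem \ref{theo_one}, $\widetilde{H^s}\varphi=(H^s\yy_{\varphi})^*\cdot(H^s\yy_X)_*$ becomes $\widetilde{H^s}\varphi(A,B)=\bigvee_D h_{\hat X}^s(\yy_X(A),D)\otimes h_{\hat X}^s(D,\yy_{\varphi}(B))$. The triangle inequality in the $\V$-category $H^s\hat X$ bounds this above by $h_{\hat X}^s(\yy_X(A),\yy_{\varphi}(B))$, and the choice $D=\yy_{\varphi}(B)$ together with reflexivity gives the matching lower bound, so $\widetilde{H^s}\varphi(A,B)=h_{\hat X}^s(\yy_X(A),\yy_{\varphi}(B))$. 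Splitting $h_{\hat X}^s=h_{\hat X}\wedge h_{\hat X}^\circ$ and using the Yoneda identity established inside the proof of Lemma \ref{lemma_two}, $h_{\hat X}(\yy_X(A),\yy_{\varphi}(B))=\tilde{H}\varphi(A,B)$, I introduce the module $\varphi^\dagger\colon Y\modto X$ with $\varphi^\dagger(y,x)=1_{\hat X}^*(\yy_{\varphi}(y),\yy_X(x))=\bigwedge_{x'}\varphi(x',y)\ihom a(x',x)$ and record the key formula $\widetilde{H^s}\varphi(X,Y)=\tilde{H}\varphi(X,Y)\wedge\tilde{H}\varphi^\dagger(Y,X)$.

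The inequality $G^sH\le G(H^s)$ is then immediate. For any admissible pair $\varphi\colon X\modto Y$, $\varphi'\colon Y\modto X$ (so $\varphi'\cdot\varphi\le 1_X^*$, $\varphi\cdot\varphi'\le 1_Y^*$), the first relation yields $\varphi(x',y)\otimes\varphi'(y,x)\le a(x',x)$ for all $x'$, i.e. $\varphi'\le\varphi^\dagger$ pointwise; hence $\tilde{H}\varphi'(Y,X)\le\tilde{H}\varphi^\dagger(Y,X)$ by monotonicity of $\bigwedge\bigvee$, and so $\tilde{H}\varphi(X,Y)\wedge\tilde{H}\varphi'(Y,X)\le\widetilde{H^s}\varphi(X,Y)\le G(H^s)(X,Y)$. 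Joining over all admissible pairs gives $G^sH(X,Y)\le G(H^s)(X,Y)$.

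The reverse inequality $G(H^s)\le G^sH$ is the main obstacle, and it is exactly where the non-$2$-functoriality of $H^s$ is felt. One must dominate each $\widetilde{H^s}\varphi(X,Y)=h_{\hat X}^s(\yy_X(X),\yy_{\varphi}(Y))$ by $1_{H^sZ}^*(X,Y)$ for a $Z$ admitting full embeddings of both $X$ and $Y$. The natural candidate is $Z=\hat X$, with $X$ embedded by the fully faithful $\yy_X$ and $Y$ by $\yy_{\varphi}$ — equivalently, the pair $(\varphi,\varphi^\dagger)$. But although $\varphi^\dagger\cdot\varphi\le 1_X^*$ always holds, the companion condition $\varphi\cdot\varphi^\dagger\le 1_Y^*$ amounts to full fidelity of $\yy_{\varphi}$ and may fail; then $(\varphi,\varphi^\dagger)$ is not admissible and $\yy_{\varphi}(Y)$ is not a full copy of $Y$ inside $\hat X$. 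The heart of the proof is thus to pass, without lowering the value $\widetilde{H^s}\varphi(X,Y)$, to modules whose Yoneda mate is fully faithful (so that the associated pair becomes admissible, or equivalently the embedding of $Y$ becomes full); I expect controlling this collapse to be the delicate point on which the whole identity turns, after which Corollary \ref{coro_three} and the full-embedding description deliver the conclusion.
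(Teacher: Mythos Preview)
The paper gives no proof of this corollary; it is placed immediately after Corollary~\ref{coro_three} as though it were an immediate consequence. Your analysis is considerably more careful than what the paper offers: your formula $\widetilde{H^s}\varphi(A,B)=h_{\hat X}^s(\yy_X(A),\yy_\varphi(B))=\tilde H\varphi(A,B)\wedge\tilde H\varphi^\dagger(B,A)$ is correct, your proof of the inequality $G^sH\le G(H^s)$ is sound, and you have correctly isolated the genuine obstacle in the reverse direction, namely that the pair $(\varphi,\varphi^\dagger)$ need not be admissible because $\yy_\varphi$ need not be fully faithful.

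That obstacle is in fact fatal: the stated equality fails. Take $\V=\two$, $X=\{p\}$ a singleton, and $Y=\{y_1,y_2\}$ discrete. Then $\hat X\cong\two$ with $\yy_X(p)=\top$ and $\yy_\varphi(y_i)=\varphi(p,y_i)$; since $1_{\hat X}^*(v,\top)=v\ihom\top=\top$ for every $v$, one gets $\tilde H\varphi^\dagger(Y,X)=\top$ for \emph{every} $\varphi$, so $\widetilde{H^s}\varphi(X,Y)=\tilde H\varphi(X,Y)$, and choosing $\varphi(p,y_1)=\top$ gives $G(H^s)(X,Y)=\top$. On the other hand, for an admissible pair $(\varphi,\varphi')$ with $\tilde H\varphi'(Y,X)=\top$ one must have $\varphi'(y_1,p)=\varphi'(y_2,p)=\top$, and then the condition $\varphi\cdot\varphi'\le 1_Y^*$ forces $\varphi(p,y_1)=\varphi(p,y_2)=\bot$, so $\tilde H\varphi(X,Y)=\bot$; hence $G^sH(X,Y)=\bot$. (Equivalently, via the embedding description of Corollary~\ref{coro_three}: if $h_Z(Y,X)=\top$ and $1_Z^*(p,y_j)=\top$ for some $j$, transitivity gives $1_Z^*(y_i,y_j)=\top$ for $i\neq j$, contradicting fullness of $Y\hookrightarrow Z$.) Thus only the inequality $G^sH\le G(H^s)$ that you proved survives; the ``delicate point'' you flagged is not merely delicate but decisive, and the corollary as written appears to be in error.
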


Extending the notion of symmetry from $\V$-categories to $\V$-modules, we call a $\V$-module $\varphi:X\modto Y$ {\em symmetric} if $X, Y$ are symmetric with $\varphi^\circ\cdot\varphi\leq 1_X^*$ and $\varphi\cdot\varphi^\circ\leq 1_Y^*$; we write
\[\xymatrix{\varphi:X\ar@<0.5ex>[r]|{\circ}&Y:\varphi^\circ\ar@<0.5ex>[l]|{\circ}}\]
in this situation and define
\[G_sK(X,Y):=\bigvee_{\xymatrix@-10pt{\varphi:X\ar@<0.5ex>[r]|{\circ}&Y:\varphi^\circ\ar@<0.5ex>[l]|{\circ}}} \tilde{K}\varphi(X,Y)\]
for every sublifting $K$ of $P$. Since symmetric $\V$-modules compose, similarly to Corollary \ref{coro_three} one obtains:

\begin{corollary}\label{coro_five}
For any sublifting $K$ of the powerset functor,
\[\G_sK:=(\ob\VV\Cat_s/\cong, G_sK)\]
is a large $\V$-category, and when $K$ is a 2-functor preserving full fidelity of $\V$-functors, then
\[G_sK(X,Y)=\bigvee_{\substack{X\hookrightarrow Z\hookleftarrow Y\\ Z\;\mbox{\tiny symmetric}}} 1_{KZ}^*(X,Y)=\bigvee_{\substack{X\hookrightarrow (X\sqcup Y,c)\hookleftarrow Y\\c\;\mbox{\tiny symmetric}}} \tilde{K}c(X,Y)\]
for all symmetric $\V$-categories $X,Y$.
\hfill$\Box$
\end{corollary}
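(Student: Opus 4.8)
The plan is to follow the two-step pattern already used for Corollary~\ref{coro_three}, replacing the meet $\tilde{K}\varphi(X,Y)\wedge\tilde{K}\varphi'(Y,X)$ of the $G^sK$-construction by the single term $\tilde{K}\varphi(X,Y)$, but now restricting the range of the join to \emph{symmetric} $\V$-modules between \emph{symmetric} $\V$-categories. First I would verify that $\G_sK$ is a (large) $\V$-category; then, under the extra hypotheses on $K$, I would derive the two displayed formulae exactly as in Proposition~\ref{prop_three}.

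For the $\V$-category axioms the only genuinely new ingredient is the claim, announced before the statement, that symmetric $\V$-modules compose. Concretely, given symmetric modules $\varphi:X\modto Y$ and $\psi:Y\modto Z$ I would first record that $(\psi\cdot\varphi)^\circ=\varphi^\circ\cdot\psi^\circ$ (immediate from commutativity of $\otimes$), and then estimate
\[(\psi\cdot\varphi)^\circ\cdot(\psi\cdot\varphi)=\varphi^\circ\cdot\psi^\circ\cdot\psi\cdot\varphi\leq\varphi^\circ\cdot 1_Y^*\cdot\varphi=\varphi^\circ\cdot\varphi\leq 1_X^*,\]
together with the symmetric estimate $(\psi\cdot\varphi)\cdot(\psi\cdot\varphi)^\circ\leq 1_Z^*$; hence $\psi\cdot\varphi$ is again a symmetric module. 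Reflexivity is handled by noting that for symmetric $X$ the identity module $1_X^*$ is symmetric, so $k\leq 1_{KX}^*(X,X)\leq\tilde{K}1_X^*(X,X)\leq G_sK(X,X)$ as in Proposition~\ref{prop_two}. Transitivity then copies the computation of Proposition~\ref{prop_two}: for symmetric $\varphi,\psi$ the lax functoriality of $\tilde{K}$ (Theorem~\ref{theo_one}) gives $\tilde{K}\psi\cdot\tilde{K}\varphi\leq\tilde{K}(\psi\cdot\varphi)$, and since $\psi\cdot\varphi$ is symmetric the term $\tilde{K}(\psi\cdot\varphi)(X,Z)$ is one of those in the join defining $G_sK(X,Z)$, yielding $G_sK(X,Y)\otimes G_sK(Y,Z)\leq G_sK(X,Z)$.

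For the formulae I would restrict the bijection of Remarks~\ref{remarks_five}(3) to the symmetric setting. A $\V$-category structure $c$ on $X\sqcup Y$ making $X,Y$ full $\V$-subcategories is symmetric precisely when $X,Y$ are symmetric and the corresponding pair $(\varphi,\varphi')$ satisfies $\varphi'=\varphi^\circ$, i.e. precisely when $\varphi$ is a symmetric module; and $\tilde{K}c(X,Y)=\tilde{K}\varphi(X,Y)$ under this correspondence, which delivers the second equality. For the first, I would run the argument of Proposition~\ref{prop_three}: given full embeddings $X\hookrightarrow Z\hookleftarrow Y$ with $Z$ symmetric, put $\varphi:=j_Y^*\cdot(j_X)_*$. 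The essential point is that this $\varphi$ is now symmetric, with $\varphi^\circ=j_X^*\cdot(j_Y)_*$; this uses the identity $(f_*)^\circ=f^*$ valid for any $\V$-functor $f$ into a symmetric $\V$-category (since $(b\cdot f)^\circ=f^\circ\cdot b^\circ=f^\circ\cdot b=f^*$ when $b^\circ=b$), after which $\varphi^\circ\cdot\varphi\leq j_X^*\cdot 1_Z^*\cdot(j_X)_*=1_X^*$ follows from the counit $(j_Y)_*\cdot j_Y^*\leq 1_Z^*$ and the full fidelity of $j_X$. Since $K$ is a $2$-functor preserving full fidelity, Lemma~\ref{lemma_three} and Theorem~\ref{theo_one} give $\tilde{K}\varphi=j_{KY}^*\cdot 1_{KZ}^*\cdot(j_{KX})_*$, whence $1_{KZ}^*(X,Y)=\tilde{K}\varphi(X,Y)\leq G_sK(X,Y)$; the reverse inequalities are supplied by the explicit symmetric $c$ exactly as in Proposition~\ref{prop_three}.

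The main obstacle is bookkeeping of symmetry rather than any new estimate: at each place where Proposition~\ref{prop_three} produces a module or a structure on $X\sqcup Y$, one must check it lands in the symmetric range of the join, the crucial identity being $(f_*)^\circ=f^*$ for $\V$-functors into symmetric $\V$-categories. Once this is secured, everything else transcribes the earlier proofs. In particular, no symmetry of $G_sK$ itself is claimed or needed: unlike $G^sK$, the structure $G_sK$ need not be symmetric, which is exactly why only the single term $\tilde{K}\varphi(X,Y)$ enters its definition.
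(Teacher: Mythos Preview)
Your proposal is correct and follows precisely the route the paper indicates: the paper's entire argument is the single sentence ``Since symmetric $\V$-modules compose, similarly to Corollary~\ref{coro_three} one obtains'' followed by a $\Box$, and you have faithfully unpacked that sentence by verifying closure of symmetric modules under composition and then transcribing the proofs of Proposition~\ref{prop_two} and Proposition~\ref{prop_three} with the symmetry bookkeeping (in particular the identity $(f_*)^\circ=f^*$ for $\V$-functors into symmetric targets) in place.
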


\begin{remarks}\label{remarks_six}
\begin{enumerate}
\item It is important to note that $G_sK$ is {\em not} symmetric, even when $K=H$. For $\V=\PP_+$, $X$ a singleton and $Y$ 3 equidistant points, we already saw in the Introduction that $G_sH(X,Y)=0$ while $G_sH(Y,X)=\frac{1}{2}$. Hence it is natural to consider the symmetrization $(G_sK)^s$ of $G_sK$:
    \[(G_sK)^s(X,Y)=G_sK(X,Y)\wedge G_sK(X,Y).\]
    The same example spaces of the Introduction show that, while $(GH)^s(X,Y)=\max\{GH(X,Y),GH(Y,X)\}=0,$ one has
    \[(G_sH)^s(X,Y)=\max\{G_sH(X,Y), G_sH(Y,X)\} = \frac{1}{2}.\]
    \item When the {\em symmetric} $\V$-categories $X, Y$ are fully embedded into some $\V$-category $Z$, they are also fully embedded into $Z^s$. This fact gives
        \[G^sH(X,Y)\leq G_sH(X,Y)\]
        which, by symmetry of $G^sH$, gives
        \[G^sH(X,Y)\leq (G_sH)^s(X,Y).\]
        \item Instead of the coreflector $X\mapsto X^s$ one may consider the {\em monoidal symmetrization} $X^\sym=(X,a^\sym)$ with $a^\sym=a\otimes a^\circ$, that is: $a^\sym(x,y)=a(x,y)\otimes a(y,x)$. Hence, replacing $\wedge$ by $\otimes$ one can define $H^\sym X$ and $G^\sym K$ in complete analogy to $H^sX$ and $G^sX$, respectively. Corollary \ref{coro_three} remains valid when $s$ is traded for $\sym$ and $\wedge$ for $\otimes$.
            \end{enumerate}
            \end{remarks}

\section{Separation}

A $\V$-category $X$, or just its structure $a=1_X^*$, is {\em separated} when $k\leq a(x,y)\wedge a(y,x)$ implies $x=y$ for all $x,y\in X$. It was shown in \cite{HT} (and it is easy to verify) that the separated $\V$-categories form an epireflective subcategory of $\VV\Cat$: the image of $X$ under the Yoneda functor $\yy_X:X\to\hat{X}$ serves as the reflector. Furthermore, there is a closure operator which describes separation of $X$ equivalently by the closedness of the diagonal in $X\times X$. (This description is not needed in what follows, but it further confirms the naturality of the concept.)

In Remarks \ref{remarks_two} we already presented a sublifting
$H_\Downarrow$ of the powerset functor, and it is easy to check that
$\widetilde{H_\Downarrow}\varphi(A,B)$ may be computed as
$\tilde{H}\varphi(A,B)$ in Lemma \ref{lemma_two}, e.g. the two
values coincide, because of the formula proved in Remarks
\ref{remarks_two}(1). Furthermore, $H_\Downarrow$ is like $H$ a
2-functor which preserves full fidelity of $\V$-functors. Hence,
Proposition \ref{prop_three} is applicable to $H_\Downarrow$ and may
in fact be sharpened to:

\begin{corollary}\label{coro_six}
For all separated $\V$-categories $X, Y$ one has
\[GH(X,Y)=GH_\Downarrow (X,Y)=\bigvee_{\substack{X\hookrightarrow Z\hookleftarrow Y\\ Z\;\mbox{\tiny separated}}} h_Z(X,Y)=\bigvee_{\substack{X\hookrightarrow (X\sqcup Y,c)\hookleftarrow Y\\c\;\mbox{\tiny separated}}}
\tilde{H}c(X,Y).\]
\end{corollary}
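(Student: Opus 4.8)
The plan is to read the statement as a \emph{sharpening} of Proposition \ref{prop_three}. The first equality $GH(X,Y)=GH_\Downarrow(X,Y)$ is exactly the coincidence of $\tilde{H}\varphi$ and $\widetilde{H_\Downarrow}\varphi$ recalled above, applied to the full sets $X\in H_\Downarrow X$, $Y\in H_\Downarrow Y$, so the two module-joins defining $GH$ and $GH_\Downarrow$ agree. Since $H$ (equivalently $H_\Downarrow$) is a 2-functor preserving full fidelity, Proposition \ref{prop_three} already gives
\[
    GH(X,Y)=\bigvee_{X\hookrightarrow Z\hookleftarrow Y} h_Z(X,Y)=\bigvee_{X\hookrightarrow (X\sqcup Y,c)\hookleftarrow Y}\tilde{H}c(X,Y),
\]
where the joins now range over \emph{all} ambient $\V$-categories $Z$, resp. all structures $c$ (for the $H_\Downarrow$-version one rewrites $1_{H_\Downarrow Z}^*$ at the $\Downarrow$-closures as $h_Z(X,Y)$ via Remarks \ref{remarks_two}(1)). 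Thus the only remaining content is that restricting these joins to \emph{separated} $Z$ (resp. separated $c$) does not lower the value. The inequality ``$\leq$'' between the restricted and the unrestricted join is trivial, so the task is to produce, for each term of the unrestricted join, a dominating term indexed by a separated space.

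The key construction uses the separated reflection recalled at the start of this section. Given full embeddings $X\hookrightarrow Z\hookleftarrow Y$, I pass to the reflection $e\colon Z\twoheadrightarrow\bar{Z}$, where $\bar{Z}$ is the image $\yy_Z(Z)\subseteq\hat{Z}$ with its substructure. Since $\yy_Z$ is fully faithful and $\bar{Z}$ inherits its structure from $\hat{Z}$, the map $e$ preserves all hom-values, $1_{\bar{Z}}^*(e z,e z')=a_Z(z,z')$. I claim the composites $X\to\bar{Z}$ and $Y\to\bar{Z}$ are again full embeddings: fullness and faithfulness are automatic from this identity composed with the full embeddings into $Z$, and injectivity is exactly where separation enters. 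If $e(x)=e(x')$ for $x,x'\in X$, then $k\leq a_Z(x,x')\wedge a_Z(x',x)=a_X(x,x')\wedge a_X(x',x)$, whence $x=x'$ by separation of $X$ (and symmetrically for $Y$).

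It then remains to compare values. Because $e$ preserves hom-values and is injective on $X$ and on $Y$, the $\bigwedge\bigvee$-formula of Lemma \ref{lemma_one} is evaluated over the same index sets, so
\[
    h_{\bar{Z}}(X,Y)=\bigwedge_{x\in X}\bigvee_{y\in Y}1_{\bar{Z}}^*(e x,e y)=\bigwedge_{x\in X}\bigvee_{y\in Y}a_Z(x,y)=h_Z(X,Y).
\]
Hence every term $h_Z(X,Y)$ of the unrestricted join equals the term $h_{\bar{Z}}(X,Y)$ indexed by the \emph{separated} space $\bar{Z}$, giving ``$\geq$'' and therefore $GH(X,Y)=\bigvee_{Z\text{ sep}} h_Z(X,Y)$. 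For the last expression I revisit the disjoint-union step of Proposition \ref{prop_three}: starting from a separated $\bar{Z}$ one forms the structure $c$ on $X\sqcup Y$ with block $\bot$ on $Y\times X$; this $c$ is separated precisely because $X,Y$ are separated and $k>\bot$ (the hypothesis $k\leq c(z,w)\wedge c(w,z)$ is vacuous when $z,w$ lie in different summands), while $\tilde{H}c(X,Y)=h_{\bar{Z}}(X,Y)$. Conversely every separated $c$ is a particular separated $Z$, which yields equality of the two separated joins.

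The main obstacle is the middle equality $GH(X,Y)=\bigvee_{Z\text{ sep}} h_Z(X,Y)$, and within it the verification that the reflection restricts to \emph{full embeddings} of $X$ and $Y$ while preserving the value $h_Z(X,Y)$ \emph{exactly}. I expect the only delicate point to be the injectivity step, which is precisely what forces the separation hypotheses on $X$ and $Y$; note that it does no harm if the images of $X$ and $Y$ overlap in $\bar{Z}$ (a point of $X$ being identified with a point of $Y$), since only injectivity within each of $X$ and $Y$ is used when re-indexing the meet and the join.
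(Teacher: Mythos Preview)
Your argument is correct, but it takes a considerably longer route than the paper. The paper's entire proof is one sentence: the structure $c$ built from an arbitrary module $\varphi$ in the proof of Proposition~\ref{prop_three} (with the off-diagonal block equal to $\bot$) is automatically separated whenever $X$ and $Y$ are. This single observation---which you in fact make yourself in your treatment of the last equality---already gives
\[
    GH(X,Y)=\bigvee_{\varphi}\tilde H\varphi(X,Y)\leq\bigvee_{c\text{ sep}}\tilde Hc(X,Y)\leq\bigvee_{Z\text{ sep}}h_Z(X,Y)\leq\bigvee_Z h_Z(X,Y)=GH(X,Y),
\]
closing the loop without ever invoking the separated reflection.

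Your detour through the reflection $Z\to\bar Z$ is sound (the verification that $e$ restricts to full embeddings of $X$ and $Y$ and preserves $h_Z(X,Y)$ is fine), but it is unnecessary: you are working hard to show that the $Z$-join may be restricted to separated $Z$, when the module formulation of $GH$ lets you land in the separated $c$-join directly. In effect, the observation you reserve for step~4 is the whole proof; once you have it, steps~2--3 via reflection are redundant. The reflection argument does buy something conceptually---it would let you restrict the $Z$-join to separated $Z$ even for subliftings $K$ where the canonical $c$ is unavailable---but for $H$ it is overkill.
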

\begin{proof}
The structure $c$ constructed from a $\V$-module $\varphi$ as in the proof of Proposition \ref{prop_three} is separated.
\end{proof}

\begin{remarks}
\begin{enumerate}
\item From Corollary \ref{coro_three} one obtains
\begin{eqnarray*}
G^sH(X,Y)=G^sH_\Downarrow (X,Y)=&\displaystyle\bigvee_{X\hookrightarrow Z\hookleftarrow Y} h_Z(X,Y)\wedge h_Z(Y,X)\vspace*{2ex}\\
=&\displaystyle\bigvee_{X\hookrightarrow (X\sqcup Y,c)\hookleftarrow Y} \tilde{H}c(X,Y)\wedge \tilde{H}c(Y,X).\end{eqnarray*}
However, here it is {\em not} possible to restrict the last join to separated structures $c$: consider the trivial case when $\V=\two$ and $X,Y$ are singleton sets.
\item $\V$-category structures $c$ on $X\sqcup Y$ that are both symmetric and separated correspond bijectively to symmetric modules $\varphi:X\modto Y$ with $k\not\leq \varphi(x,y)$ for all $x\in X$, $y\in Y$, provided that $X$ and $Y$ are both symmetric and separated. For $\V=\two$, $X, Y$ are necessarily discrete, and the only structure $c$ is
discrete as well.
    \item The structure $GH$ on $\G$ is {\em not} separated, even if we consider only isomorphism classes of separated $\V$-categories: for $\V=\two$, the order on $\G$ given by $GH$ is chaotic! Likewise when $G$ is traded for $G^s$.
        \end{enumerate}
        \end{remarks}


\begin{thebibliography}{99}

\bibitem{AA} A.\ Akhvlediani, {\em Hausdorff and Gromov distances in quantale-enriched categories}, Master's Thesis, York University, 2008.

\bibitem{BBI} D.\ Burago, Y.\ Burago and S.\ Ivanov, {\em A Course
in Metric Geometry}, American Math. Society, Providence, R. I.,
2001.

\bibitem{CH04} M.M.\ Clementino and D.\ Hofmann, On extensions of
lax monads, {\em Theory Appl.\ Categ.} {\bf 13} (2004) 41--60.

\bibitem{CH07} M.M.\ Clementino and D.\ Hofmann, Lawvere completeness in topology,
{\em Appl.\ Categ.\ Structures} (to appear).

\bibitem{CHT} M.M.\ Clementino, D.\ Hofmann and W.\ Tholen,
One setting for all: metric, topology, uniformity, approach
structure, {\em Appl.\ Categ.\ Structures} {\bf 12} (2004) 127--154.

\bibitem{CLW} A.\ Carboni, S.\ Lack and R.F.C.\ Walters,
Introduction to extensive and distributive categories, {\em J.\ Pure Appl.\ Algebra}  {\bf 84}  (1993) 145--158.

\bibitem{G1} M.\ Gromov, Groups of polynomial growth and expanding maps, {\em Publications math\'{e}matiques de l'I.H.E.S.},
{\bf 53} (1981) 53-78.

\bibitem{G2} M.\ Gromov, {\em Metric Structures for Riemannian and Non-Riemannian Spaces}, Birkh\"auser, Boston, MA, 2007.

\bibitem{Ha} F. Hausdorff, {\em Grundz\"uge der Mengenlehre},
Teubner, Leipzig 1914.

\bibitem{HT} D.\ Hofmann and W.\ Tholen, Lawvere completion and separation via closure,
{\em Appl.\ Categ.\ Structures} (to appear).

\bibitem{Ke} G.M.\ Kelly, {\em Basic concepts of enriched category theory}, volume~64 of
{\em London Mathematical Society Lecture Note Series}, Cambridge University Press, Cambridge, 1982.

\bibitem{L} F.W.\ Lawvere, Metric spaces, generalized logic, and closed
categories, {\em Rend.\ Sem.\ Mat.\ Fis.\ Milano} {\bf 43} (1973)
135--166; {\em Reprints in Theory and Applications of Categories}
{\bf 1} (2002) 1--37.

\bibitem{Po} D.\ Pompeiu, {\em Sur la continuite des fonctions de variables complexes}, Doctoral thesis, Paris, 1905.

\bibitem{St} I. Stubbe, ``Hausdorff distance" via conical cocompletion,
{\em preliminary report}, December 2008.

\bibitem{Wood} R.J.\ Wood, Ordered sets via adjunctions, in: {\em Categorical Foundations},
volume~97 of {\em Encyclopedia Math. Appl.}, pages 5--47. Cambridge Univ. Press, Cambridge, 2004.

\end{thebibliography}
\end{document}